\theoremstyle{plain}
\newtheorem{thm}{Theorem}
\newtheorem{lem}[thm]{Lemma}
\newtheorem{cor}[thm]{Corollary}
\newtheorem{prop}[thm]{Proposition}
\newtheorem{thma}{Theorem}
\newtheorem{cora}[thma]{Corollary}
\newtheorem{conj}[thm]{Conjecture}
\theoremstyle{definition}
\newtheorem{defn}[thm]{Definition}
\newtheorem{rem}[thm]{Remark}
\numberwithin{thm}{section}
\newcommand{\comment}[1]{}
\newcommand{\Om}{\Omega}
\newcommand{\coker}{\operatorname{coker}}
\newcommand{\Hom}{\operatorname{Hom}}
\newcommand{\Ext}{\operatorname{Ext}}
\newcommand*{\longhookrightarrow}{\ensuremath{\lhook\joinrel\relbar\joinrel\rightarrow}}
\newcommand{\Eext}{\widehat{\Ext}}
\newcommand{\ed}{{\rm ext.deg }}
\newcommand{\im}{\operatorname{im}}
\newcommand{\ql}{\operatorname{ql}}
\newcommand{\fed}{{\rm{fed}}}
\renewcommand{\to}{\longrightarrow}
\newcommand{\StHom}{\underline{\operatorname{Hom}}}
\newcommand{\D}{\operatorname{D}}
\newcommand{\Lmod}{\operatorname{\Lambda \textrm{-}mod}}
\begin{document}

\title{Vanishing of self-extensions over symmetric algebras}

\author{Kosmas Diveris \ and \ Marju Purin}\address{Department of Mathematics, Statistics, and Computer Science \\St.\ Olaf College\\ Northfield\\ MN 55057\\ USA}\email{diveris@stolaf.edu}
\address{Department of Mathematics, Statistics, and Computer Science \\St.\ Olaf College\\ Northfield\\ MN 55057\\ USA}\email{purin@stolaf.edu}

\maketitle 

\begin{abstract} We study self-extensions of modules over symmetric artin algebras. We show that non-projective modules with eventually vanishing self-extensions must lie in AR components of stable type $\mathbb{Z}A_{\infty}$. Moreover, the degree of the highest non-vanishing self-extension of these modules is determined by their quasilength. This has implications for the Auslander-Reiten Conjecture and the Extension Conjecture. \end{abstract}

\section{Introduction}

In this article we study finitely generated modules with eventually vanishing self-extensions over a symmetric artin algebra $\Lambda$ via its Auslander-Reiten quiver. This investigation is motivated by two questions.  The first is to examine which algebras satisfy the Generalized Auslander-Reiten Condition.

\begin{defn} \label{cond:GARC} A ring $R$ is said to satisfy the {\it Generalized Auslander-Reiten Condition} (GARC) if for each $R$-module $M$ with $\Ext_R^i(M,M \oplus R) = 0$ for all $i > n$ the projective dimension of $M$ is at most $n$.  \end{defn}  

In the special case when $n=0$ one obtains the Auslander-Reiten Condition (ARC) for the ring $R$.  The Auslander-Reiten Conjecture, still an open question, asserts that all artin algebras satisfy the Auslander-Reiten Condition \cite{AR}.  It is clear that any algebra satisfying (GARC) also satisfies (ARC), but it is known that these conditions are not equivalent. It is also known that there do exist artin algebras which do not satisfy (GARC) \cite{Schulz}. 

The second motivating question has been called the Extension Conjecture.  In \cite{ILP}, K. Igusa, S. Liu, and C. Paquette prove the Strong No Loop Conjecture, originally stated in \cite{Z}, for finite-dimensional elementary algebras and conjecture that the following stronger statement holds:

\begin{conj}[\cite{ILP}] Let $S$ be a simple module over an artin algebra with $\Ext^1(S,S) \neq 0$. Then we have $\Ext^i(S,S) \neq 0$ for infinitely many integers $i$. 
\end{conj}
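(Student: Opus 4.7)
The conjecture is stated for arbitrary artin algebras, whereas the main theorem of this paper applies only to symmetric ones, so the plan naturally splits into two parts: first dispatching the symmetric case using the tools developed here, then attempting to reduce the general case to the symmetric one.

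For the symmetric case, suppose for contradiction that $\Lambda$ is symmetric and $S$ is a simple $\Lambda$-module with $\Ext^1(S,S) \neq 0$ but $\Ext^i(S,S) = 0$ for all $i > n$. Because $\Lambda$ is selfinjective, $\Ext^i(S, \Lambda) = 0$ for all $i > 0$, so the hypothesis $\Ext^i(S, S \oplus \Lambda) = 0$ for $i > n$ of the main theorem is automatically satisfied. The non-vanishing of $\Ext^1(S,S)$ prevents $S$ from having finite projective dimension; over a selfinjective algebra this forces $S$ to be non-projective. The main theorem then places $S$ in an AR-component of stable type $\mathbb{Z}A_\infty$, with the degree of the highest non-vanishing self-extension determined by $\ql(S)$. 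One then verifies that no simple module sitting in such a component can simultaneously satisfy $\Ext^1(S,S) \neq 0$ and the prescribed eventual vanishing: a combinatorial analysis of the component together with the explicit formula for $\ql(S)$ provides the desired contradiction.

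For a general artin algebra, one first reduces to the basic connected case and then passes to the trivial extension $\Gamma = \Lambda \ltimes \D\Lambda$, which is symmetric. Via the canonical projection $\Gamma \to \Lambda$, each simple $\Lambda$-module $S$ becomes a simple $\Gamma$-module, and one would employ a change-of-rings spectral sequence to compare $\Ext_\Lambda^*(S,S)$ with $\Ext_\Gamma^*(S,S)$. The goal is to transfer both the non-vanishing $\Ext^1(S,S)$ and the eventual vanishing of $\Ext^i(S,S)$ into $\Gamma$-terms, so that the symmetric case above applies.

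The key obstacle is precisely this Ext-transfer. The bimodule $\D\Lambda$ typically contributes many additional self-extensions over $\Gamma$ with no $\Lambda$-analogue, so eventual vanishing of $\Ext^i_\Lambda(S,S)$ need not imply eventual vanishing of $\Ext^i_\Gamma(S,S)$; in fact, for many $\Lambda$, simple modules become $\Omega$-periodic over $\Gamma$ with infinitely many nonzero self-extensions, which is the wrong direction of control for the argument above. Consequently the naive trivial-extension reduction is unlikely to succeed, and a proof of the conjecture in full generality probably requires extending the structural dichotomy of this paper beyond the symmetric setting---perhaps to Iwanaga-Gorenstein algebras, or through a derived-category reformulation---a task that appears substantially harder because the present arguments rely on the identity $\Lambda \cong \D\Lambda$ of bimodules.
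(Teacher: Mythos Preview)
The statement you are attempting to prove is the Extension Conjecture, which the paper presents as an \emph{open conjecture} (attributed to Igusa--Liu--Paquette), not as a theorem; the paper contains no proof of it. The paper's contribution is to verify the conjecture for restricted classes of symmetric algebras---specifically those in which no simple module lies in an AR component of stable type $\mathbb{Z}A_\infty$ (see the final corollary of Section~\ref{sec:applications}). So there is no ``paper's proof'' to compare against.

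Your argument for the symmetric case contains a genuine gap. You correctly invoke Theorem~A to place a non-projective simple $S$ with finite extension degree into a component of stable type $\mathbb{Z}A_\infty$, and Theorem~\ref{thm:ZAinfty} then gives $\ed(S) = m + 2\,\ql(S)$, where $m$ is the extension degree of a quasi-simple. But your next step---``a combinatorial analysis of the component together with the explicit formula for $\ql(S)$ provides the desired contradiction''---is empty as written. There is no formula computing $\ql(S)$ from the fact that $S$ is simple, and nothing in the paper's structural results excludes a simple module from sitting at some quasilength in a $\mathbb{Z}A_\infty$ component while having $\Ext^1(S,S)\neq 0$ and finite extension degree. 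The remark following Theorem~\ref{thm:ZAinfty} exhibits non-projective modules with finite extension degree in such components; showing that none of them can be simple with a loop would require an additional idea not supplied here. This is exactly why the paper claims the Extension Conjecture only under the extra hypothesis that no simple module lies in a $\mathbb{Z}A_\infty$ component.

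You yourself identify correctly that the trivial-extension reduction for general artin algebras does not go through. But since the symmetric case is already not settled by your argument (nor by the paper), a working reduction would still not finish the problem.
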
  

This statement again concerns modules with eventually vanishing self-extensions.  In fact, the Extension Conjecture holds for self-injective algebras satisfying (GARC).  Indeed, if $\Lambda$ is self-injective and satisfies (GARC) then any simple $\Lambda$-module $S$ with $\Ext^i(S,S) = 0$ for all $i \gg 0$ must be projective and thus $\Ext^1(S,S) = 0$.

In this paper, we describe which AR components may contain modules with eventually vanishing self-extensions. A similar question has been considered before.  M. Hoshino has shown that a module with trivial self-extensions in all positive even degrees must appear on the boundary of a component of stable type $\mathbb{Z}A_{\infty}$ \cite[Theorem 1.5]{Hoshino}.  Our main result is as follows: 

\begin{thma} \label{thm:main} Assume that $\Lambda$ is a symmetric artin algebra.  Let $\mathcal{C}$ be an AR component containing a non-projective module with eventually vanishing self-extensions.  Then the stable part of $\mathcal{C}$ is of the form $\mathbb{Z}A_{\infty}$.  
\end{thma}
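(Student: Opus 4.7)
The plan is to first rule out periodicity of the component $\mathcal{C}$, so that by Riedtmann's structure theorem its stable part has the form $\mathbb{Z}T$ for some valued tree $T$, and then to identify $T$ with $A_\infty$ by exploiting the vanishing of $\Ext^i(M,M)$ for $i \gg 0$ via a subadditive function argument of Happel-Preiser-Ringel type.

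First I would show that $M$ is not $\Omega$-periodic. Since $\Lambda$ is symmetric, the Nakayama functor is trivial and $\tau = \Omega^2$ on the stable category $\SLmod$. If $\Omega^n M \cong M$ for some $n \geq 1$, then the isomorphism represents a non-zero element of $\underline{\Hom}_\Lambda(M, \Omega^n M) \cong \Ext^n_\Lambda(M,M)$; iterating this isomorphism (i.e.\ taking its $k$-fold composition under the shift) produces non-zero classes in $\Ext^{kn}_\Lambda(M,M)$ for every $k \geq 1$, contradicting eventual vanishing. Hence $M$ is neither $\Omega$- nor $\tau$-periodic, and Riedtmann's structure theorem yields $\mathcal{C}^s \cong \mathbb{Z}T$ for some valued tree $T$.

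Next, for each $n$ define $f_n \colon \mathcal{C}^s_0 \to \mathbb{Z}_{\geq 0}$ by
\[
f_n(X) \;=\; \dim_k \underline{\Hom}_\Lambda(X,\Omega^n M).
\]
Applying $\underline{\Hom}_\Lambda(-,\Omega^n M)$ to the AR triangle in the stable category ending at $X$ gives a long exact sequence which implies the subadditive inequality $f_n(X) + f_n(\tau X) \geq \sum_{Y \to X} d_{YX}\, f_n(Y)$. By the Happel-Preiser-Ringel classification, if $f_n$ is nonzero then $T$ is (finite or infinite) Dynkin or Euclidean. Finite Dynkin and Euclidean trees would force the component $\mathcal{C}^s$ to be $\tau$-periodic (contradicting the first step via translation-orbit arguments on $\mathbb{Z}\Delta$), leaving $T \in \{A_\infty,\, D_\infty,\, A_\infty^\infty\}$.

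Finally, to exclude $D_\infty$ and $A_\infty^\infty$, I would use the additional information provided by eventual vanishing. For $n$ sufficiently large, $f_n(M) = \dim \Ext^n_\Lambda(M,M) = 0$, and by shifting the argument ($\Ext^{n+r}(M,M) = 0$ corresponds to vanishing on $\tau^{\lceil r/2 \rceil} M$) the function $f_n$ vanishes along the entire $\tau$-orbit of $M$ for $n$ large. A careful examination of mesh additivity at a vertex on the $\tau$-orbit of $M$ in a component of type $\mathbb{Z}D_\infty$ or $\mathbb{Z}A_\infty^\infty$ then shows that such orbit-vanishing forces $f_n$ to vanish identically on $\mathcal{C}^s$, contradicting the fact that $f_n(\Omega^n M)$ detects the identity class of $\Omega^n M$ in the stable category. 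The asymmetry of $A_\infty$ (a single boundary and one-sided growth) is precisely what is needed for a non-zero subadditive function to vanish on one $\tau$-orbit without vanishing everywhere, which singles out $T = A_\infty$.

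The main obstacle is this last step: $D_\infty$ and $A_\infty^\infty$ support interesting non-trivial subadditive functions of their own, so ruling them out requires more than a bare classification result. One must exploit the self-referential structure --- that the function $f_n$ is defined by $\Ext$ against $M$, a vertex of the very component being analyzed --- and probably combine this with Auslander-Reiten-type duality $\Hom_\Lambda(M, \Omega^i N) \cong D\Hom_\Lambda(N, \Omega^{-i}\tau M)$ available for symmetric $\Lambda$, to convert orbit-vanishing into a rigidity statement that fails in the forked or two-sided cases.
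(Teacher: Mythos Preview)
Your subadditive-function approach is genuinely different from the paper's, but there are two real gaps.

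First, the claim that ``finite Dynkin and Euclidean trees would force the component $\mathcal{C}^s$ to be $\tau$-periodic'' is unjustified and in fact false as stated: $\mathbb{Z}A_n$ is a perfectly good non-periodic stable translation quiver, and nothing in the Happel--Preiser--Ringel argument alone excludes it as a stable AR component of a symmetric algebra. The paper handles the finite-orbit case by a separate extension-degree computation (Theorem~\ref{thm:bands}): comparing $\ed(\mathcal{C}_M^{n+2})$ with $\ed(\mathcal{C}_{\tau M}^n)$ yields a numerical contradiction. No periodicity is invoked.

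Second, the exclusion of $D_\infty$ and $A_\infty^\infty$ does not work as sketched. You assert that for $n$ large, $f_n$ vanishes on the entire $\tau$-orbit of $M$. But $f_n(\tau^j M)\cong\Eext^{n-2j}(M,M)$, and over a symmetric algebra the AR duality gives $D\,\Eext^i(M,M)\cong\Eext^{-i-2}(M,M)$, so $\Eext^k(M,M)$ is nonzero precisely in a bounded window of degrees. Hence for every $n$ the function $f_n$ is nonzero somewhere on the $\tau$-orbit of $M$ (near $j\approx n/2$), and the ``orbit-vanishing forces global vanishing'' mechanism never starts. You correctly flag this step as the main obstacle; in fact subadditive functions on $\mathbb{Z}D_\infty$ and $\mathbb{Z}A_\infty^\infty$ are flexible enough that this line of argument does not obviously close.

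The paper proceeds along an entirely different axis: it splits according to whether $\mathcal{C}$ is \emph{eventually $\Omega$-perfect} in the sense of Green--Zacharia. If not, a result of Kerner--Zacharia produces an irreducible epimorphism in $\mathcal{C}$ with $\Omega$-periodic simple kernel; Proposition~\ref{prop:perp} then forces that simple to have finite extension degree, hence to be projective, a contradiction. If $\mathcal{C}$ is eventually $\Omega$-perfect, one argues directly with lengths and the behaviour of irreducible maps under $\Omega$ (Propositions~\ref{prop:boundary.exists} and~\ref{prop:Omega.perfect1}) to show that every stable vertex has valence at most $2$ and some vertex has valence $1$; together with Theorem~\ref{thm:bands} this forces $\mathbb{Z}A_\infty$. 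Riedtmann's theorem and the Happel--Preiser--Ringel classification are not used.
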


Moreover, we show that all modules in $\mathcal{C}$ have eventually vanishing self-extensions and describe the highest degree in which a module in $\mathcal{C}$ has a non-trivial self-extension.  This implies that when (GARC) does not hold for a symmetric algebra, there must be an entire $\mathbb{Z}A_{\infty}$ component consisting of modules with eventually vanishing self-extensions. 

Theorem A gives a reduction criterion for both the  Generalized Auslander-Reiten Condition and the Extension Conjecture. We obtain the following corollary as an application of our main result.

\begin{cora} Let $\Lambda$ be a symmetric artin algebra. Then both the Generalized Auslander-Reiten Condition and the Extension Conjecture hold in the following cases:

\begin{enumerate}
\item $\Lambda$ has no AR component of stable type $\mathbb{Z}A_{\infty}$.
\item $\Lambda$ is of wild tilted type.
\item $\Lambda$ is the trivial extension of an iterated tilted algebra.
\end{enumerate}
\end{cora}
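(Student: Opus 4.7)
The plan is to reduce both conjectures to a single statement about non-projective modules with eventually vanishing self-extensions, handle (1) by a direct application of Theorem A, and deal with (2) and (3) by combining Theorem A with existing structural results on AR components in these settings.

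\textbf{Reduction.} Since $\Lambda$ is symmetric, it is self-injective, so $\Lambda$ is an injective left module. Hence $\Ext_\Lambda^i(M,\Lambda) = 0$ for all $i \geq 1$ and every $\Lambda$-module $M$, and the hypothesis of \garc{} collapses to $\Ext^i(M,M) = 0$ for $i > n$. Since over a self-injective algebra finite projective dimension forces projectivity, \garc{} for $\Lambda$ becomes the single statement: \emph{no non-projective $\Lambda$-module has eventually vanishing self-extensions}. The Extension Conjecture for $\Lambda$ follows from the same statement applied to a simple $S$ with $\Ext^1(S,S) \neq 0$, which is automatically non-projective.

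\textbf{Case (1).} The contrapositive of Theorem A immediately gives that if $\Lambda$ has no AR component of stable type $\mathbb{Z}A_\infty$, then no non-projective module has eventually vanishing self-extensions, and the reduced statement holds.

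\textbf{Cases (2) and (3).} Here $\Lambda$ may possess AR components of stable type $\mathbb{Z}A_\infty$, so Case (1) does not apply. The plan is to use the strengthening of Theorem A announced in the introduction: once one module of a component $\mathcal{C}$ has eventually vanishing self-extensions, every module of $\mathcal{C}$ does, with the top non-vanishing degree determined by quasilength. It therefore suffices to exhibit in every stable $\mathbb{Z}A_\infty$ component of $\Lambda$ a single module whose self-extensions fail to vanish in infinitely many degrees. For $\Lambda$ of wild tilted type this should follow from the Ext-behaviour of regular modules in wild hereditary and wild tilted representation theory transported to the symmetric setting; for $\Lambda = T(A)$ with $A$ iterated tilted, Happel's derived equivalence between $\D^b(A)$ and the stable module category of $\Lambda$, together with the known description of AR components in such trivial extensions, would supply the required witness modules. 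This last step is the main obstacle: once the reduction and the strengthened Theorem A are available, it amounts to case-specific representation-theoretic input rather than a new general principle.
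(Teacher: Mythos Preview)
Your reduction and Case~(1) are correct and match the paper exactly.

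For Case~(2), your strategy is right in spirit but stops short of an actual argument. The paper does not hunt for witness modules; it invokes a specific external result, Theorem~9.6 of \cite{EKS}, which already asserts that every non-projective module in a stable $\mathbb{Z}A_\infty$ component of a self-injective algebra of wild tilted type has infinite extension degree. Combined with Theorem~A for the remaining components, this finishes~(2) immediately. Your version would also work once such a result is cited, since Corollary~\ref{cor:ext.deg_component} does give that finite extension degree propagates through a component; but as written it is a plan, not a proof.

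For Case~(3), your proposed route via Happel's derived equivalence is unnecessarily hard, and you yourself flag it as the main obstacle. The paper's argument is much simpler and avoids finding witnesses altogether: split on representation type. If $\Lambda = T(A)$ with $A$ iterated tilted is of wild tilted type, Case~(2) applies. If not, then by \cite{T} and \cite{TW} the AR quiver of $\Lambda$ has no component of stable type $\mathbb{Z}A_\infty$ at all, so Case~(1) applies. No analysis of individual $\mathbb{Z}A_\infty$ components is needed in the non-wild situation because there are none. This is the missing idea in your treatment of~(3): rather than probing the $\mathbb{Z}A_\infty$ components that might exist, one shows they do not exist outside the wild case.
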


Since the Extension Conjecture implies the Strong No Loop Conjecture, we also obtain the validity of the latter in the above cases. We remark that unlike \cite{ILP} we do not require $\Lambda$ to contain a field.

We now give an outline of the article. In \autoref{sec:vanishing} we set up notation and collect a series of technical results on the vanishing of self-extensions. Theorem\autoref{thm:tubes} and\autoref{thm:bands} establish the condition (GARC) for stable AR components that are $\tau$-periodic, or that have finitely many $\tau$-orbits and a slice without cycles. In \autoref{sec:Omega_perfect} we make use of the notion of $\Om$-perfect AR components which were introduced by E. Green and D. Zacharia \cite{GZ1}. We prove the analog to Hoshino's result for eventually $\Omega$-perfect AR components. In \autoref{sec:not_Omega_perfect} we treat components of $\Gamma_{\Lambda}$ which are are not eventually $\Omega$-perfect. We show that all non-projective modules in these components have non-vanishing self-extensions in infinitely many degrees. Finally, in \autoref{sec:applications} we prove the Generalized AR Condition and the Extension Conjecture for some classes of symmetric algebras, including those of wild tilted type.

\section{Vanishing of Self-extensions} \label{sec:vanishing}

In this paper we consider symmetric algebras. Recall that an Artin algebra $\Lambda$ is symmetric if $\Lambda \cong \D(\Lambda)$ as $\Lambda$-bimodules where $\D$ denotes the standard duality. We assume throughout that all modules are finitely generated left $\Lambda$-modules. The Auslander-Reiten translate of a module $M$ is denoted by $\tau(M)$.  For symmetric algebras $\tau=\Omega^2$ where $\Omega$ denotes the syzygy operator. All finitely generated modules over $\Lambda$ can be organized into a combinatorial device called the Auslander-Reiten quiver which we denote by $\Gamma_{\Lambda}$. 
Furthermore, the {\it stable AR quiver} of $\Lmod$ is obtained by removing all of the vertices that correspond to projective modules from the AR quiver of $\Lmod$.
For background on AR theory we refer the reader to \cite{ARS}.  

We now turn to cohomology in the stable module category.  Over a self-injective algebra, every module has a complete resolution, i.e. there is an acyclic complex of projective modules $(P. , \delta .)$ where $\im \delta_0 \cong M$.  For each $i \in \mathbb{Z}$, we set $\Omega^i(M) = \im \delta_i$.   For each $i \in \mathbb{Z}$, we define the stable cohomolgy of $M$ with coefficients in $N$ to be $\Eext^i(M,N) = \StHom(\Omega^i(M),N)$.  Note that according to the definitions above, the truncated complex $P_{\geq 0}$ gives a projective resolution of $M$, so that for all $i \geq 0$, $\Omega^i(M)$ is the $i^{th}$ syzygy of $M$. From this it follows that for $i \geq 1$ we have $\Eext^i(M,N) = \Ext^i(M,N)$. In degree zero however, there is an important difference between these cohomologies.  If $M$ is a nonzero module, then the identity map is a nonzero morphism in $\Hom_{\Lambda}(M,M)$, but $\StHom_{\Lambda}(M,M) =0$ if and only if the module $M$ is projective. 

We record the following dimension shift isomomorphism that we employ frequently:
\begin{align} \label{eq:dimshift} \Eext^i(M,N) \cong \Eext^{i-m+n}(\Omega^m(M), \Omega^n(N)) \end{align} which holds for all $i, m, n \in \mathbb{Z}$.   
One advantage of using stable cohomology is that dimension shifting holds for all indices (when using classical cohomology, dimension shifting only holds for extensions of positive degrees).  Another advantage of using stable cohomology in our setting is that the projective modules are characterized by the vanishing of their stable endomorphism groups.  

\comment{

While we will not make use of this fact, we do point out that when $\Lambda$ is symmetric, combining equations (\ref{eq:AR.formula}) and (\ref{eq:dimshift}) gives \begin{align} \D(\Ext^i(M,N) \cong \D\StHom(\Omega^iM,N) \cong \Ext^1(N, \Omega^{i+2}M) \cong \Eext^{-i-1}(N,M). \end{align} In particular, this gives $\Eext^i(M,M) = 0$ for all $i \gg 0$ if and only if $\Eext^i(M,M) = 0$ for all $i \ll 0$.  Also, if $\Eext^i(M,M) = 0$ for all $i>0$, then the graded algebra $\Eext^*(M,M) = \Eext^0(M,M) \oplus \Eext^{-1}(M,M) = \StHom(M,M) \oplus \D(\StHom(M,M))$ is the trivial extension of the stable endomorphism algebra of $M$.

Actually, is it true that $\Eext^*(M,M) = \Eext^{\geq 0}(M,M) \ltimes \D\Eext^{\geq 0}(M,M)$? }


\subsection{Vanishing of Self-Extensions}

In this section we record several results concerning the vanishing of cohomology that we need in the sequel. Our first lemma is used repeatedly.  

\begin{lem} \label{lem:indices} Let  $0 \rightarrow \Om^2 M \rightarrow N \rightarrow M  \rightarrow 0$ be a short exact sequence in $\Lmod$. 

\begin{enumerate}[(i)] %
\item \label{lem:cov.indices}  If $X$ is an $\Lambda$-module such that  $\Eext_{\Lambda}^k(X,M) = 0$ for $k>n$, then we have $\Eext_{\Lambda}^k(X,N) = 0$ for $k > n+2$, and $\Eext_{\Lambda}^{n+2}(X,N) \cong \Eext_{\Lambda}^{n}(X,M)$. 
\item \label{lem:con.indices}  If $X$ is an $\Lambda$-module such that $\Eext_{\Lambda}^k(M,X) = 0$ for $k>n$, then we have  $\Eext_{\Lambda}^k(N,X) = 0$ for $k > n$, and $\Eext_{\Lambda}^n(N,X) \cong \Eext_{\Lambda}^n(M,X)$. %
\end{enumerate} 
\end{lem}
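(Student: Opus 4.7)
The plan is to deduce both statements from the long exact sequences in stable cohomology induced by the given short exact sequence $0\to\Om^2 M\to N\to M\to 0$, combined with the dimension-shift isomorphism \eqref{eq:dimshift}. The point is that dimension shifting rewrites every term neighboring the middle slot as a stable Ext group involving only $M$, after which each claim reduces to inspecting which indices lie above $n$.

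For part (ii), I would apply $\Eext^*(-,X)$ to the short exact sequence to obtain
\begin{multline*}
\cdots \to \Eext^{k-1}(\Om^2 M,X) \to \Eext^k(M,X) \to \Eext^k(N,X) \\
\to \Eext^k(\Om^2 M,X) \to \Eext^{k+1}(M,X) \to \cdots
\end{multline*}
and invoke \eqref{eq:dimshift} in the form $\Eext^j(\Om^2 M,X)\cong \Eext^{j+2}(M,X)$. Once this is done, the four terms adjacent to $\Eext^k(N,X)$ are stable Ext groups from $M$ to $X$ with indices in $\{k,k+1,k+2\}$. For $k>n$ all of them vanish by hypothesis, so $\Eext^k(N,X)=0$. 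At $k=n$ the non-middle neighbors have indices $n+1$ and $n+2$, which still vanish, and exactness of the resulting fragment $0\to \Eext^n(M,X)\to \Eext^n(N,X)\to 0$ gives $\Eext^n(N,X)\cong \Eext^n(M,X)$.

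For part (i), the argument is symmetric. Applying $\Eext^*(X,-)$ and using $\Eext^k(X,\Om^2 M)\cong \Eext^{k-2}(X,M)$, the neighbors of $\Eext^k(X,N)$ become stable Ext groups from $X$ into $M$ with indices in $\{k-2,k-1,k\}$. These all vanish once $k>n+2$, yielding $\Eext^k(X,N)=0$ in that range. At $k=n+2$ the two neighbors with indices $n+1$ and $n+2$ vanish by hypothesis, and the resulting sequence isolates $\Eext^{n+2}(X,N)\cong \Eext^{n+2}(X,\Om^2 M)\cong \Eext^n(X,M)$.

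I do not foresee any serious obstacle. The only subtlety is the sign convention in the dimension shift, which is precisely why the excerpt emphasizes that \eqref{eq:dimshift} is valid for \emph{all} integers and not only positive ones. Everything else is routine bookkeeping, resting on the fact that over a self-injective algebra any short exact sequence of modules yields long exact sequences of stable Ext groups in each variable.
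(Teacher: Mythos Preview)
Your argument is correct and follows exactly the same route as the paper: apply the long exact sequence in stable cohomology, use the dimension shift \eqref{eq:dimshift} to rewrite the $\Om^2 M$ terms, and read off both the vanishing and the isomorphism from the resulting exact fragment. The only cosmetic difference is that you spell out part (ii) explicitly, whereas the paper proves (i) and declares (ii) analogous.
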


\begin{proof}  (i) Consider the long exact sequence in $\Eext^*(X,-)$: \begin{align*} \cdots \rightarrow \Eext_{\Lambda}^{k-1}(X, M) \rightarrow \Eext_{\Lambda}^{k}(X, \Om^2M) \rightarrow \Eext_{\Lambda}^{k}(X, N) \rightarrow \Eext_{\Lambda}^{k}(X, M) \rightarrow \cdots \end{align*} Shifting dimensions, we see that the second term listed is isomorphic to $\Eext_{\Lambda}^{k-2}(X,M)$.  Therefore the second and fourth terms vanish when $k>n+2$, so the third must also vanish.  This demonstrates that the stated vanishing holds.  For the isomorphism, we consider the piece of the above sequence when $k=n+2$.  In this case the first and last terms are zero and the dimension shift gives that the second term is isomorphic to $\Eext^n(X,M)$. The middle map now provides the desired isomorphism. 

The proof of (ii) is analogous to that of (i).
\end{proof}

We find it useful to introduce the notion of an extension degree of a module as a means of describing the vanishing of self-extensions in AR components.

\begin{defn} For a $\Lambda$-module $M$ we define the  extension degree of $M$ to be $\ed(M) = \sup\{ i \ | \ \Ext^i(M,M) \neq 0 \}$. \end{defn}

An application of the above lemma gives the following.

\begin{lem} \label{lem:ed.of.middle.term} Let $M$ be a non-projective $\Lambda$-module with $\ed(M) = m$. Let $0 \rightarrow \Om^2 M \rightarrow N \rightarrow M \rightarrow 0$ be an exact sequence. Then we have $\ed(N) = m+2$ and an isomorphism $\Eext_{\Lambda}^{m+2}(N,N) \cong \Eext_{\Lambda}^m(M,M)$.
\end{lem}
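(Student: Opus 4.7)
The plan is to apply \autoref{lem:indices} twice to the given short exact sequence $0 \to \Om^2 M \to N \to M \to 0$, first with $X = M$ (the covariant version) and then with $X = N$ (the contravariant version). The hypothesis $\ed(M) = m$ unpacks as $\Ext^k(M,M) = 0$ for $k > m$, and since $\Ext^k = \Eext^k$ for $k \geq 1$ this upgrades to $\Eext^k(M,M) = 0$ for $k > m$. Part (i) of the lemma with $X = M$ and $n = m$ then yields $\Eext^k(M,N) = 0$ for $k > m+2$ together with an isomorphism $\Eext^{m+2}(M,N) \cong \Eext^m(M,M)$.

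Next I would feed the resulting vanishing into part (ii) of the lemma with $X = N$ and $n = m+2$, which produces $\Eext^k(N,N) = 0$ for $k > m+2$ along with $\Eext^{m+2}(N,N) \cong \Eext^{m+2}(M,N)$. Composing the two isomorphisms gives the desired $\Eext^{m+2}(N,N) \cong \Eext^m(M,M)$, and the vanishing, read back through the identification $\Ext^k = \Eext^k$ in positive degrees, yields $\ed(N) \leq m+2$.

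To obtain the reverse inequality $\ed(N) \geq m+2$, I need to verify that $\Eext^m(M,M) \neq 0$. If $m \geq 1$ this is simply $\Ext^m(M,M)$, which is nonzero by the very definition of $\ed(M)$. If $m = 0$ then $\Eext^0(M,M) = \StHom(M,M)$, which is nonzero precisely because $M$ is non-projective — this is the only place the non-projectivity hypothesis enters the argument. The proof is essentially mechanical: the real work is already packaged inside \autoref{lem:indices}, and the only subtle point is treating the edge case $m=0$ uniformly via stable cohomology rather than classical $\Ext$.
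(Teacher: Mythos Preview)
Your proof is correct and follows the same two-step application of Lemma~\ref{lem:indices} as the paper: first part~(i) with $X=M$, then part~(ii) with $X=N$, composing the resulting isomorphisms. Your treatment of the edge case $m=0$ via $\StHom(M,M)\neq 0$ is in fact more explicit than the paper's, which simply asserts $\Eext^m(M,M)\neq 0$ at the outset.
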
 

\begin{proof}   We have $\Eext_{\Lambda}^m(M,M)\neq 0$.   First invoke part (\ref{lem:cov.indices}) of Lemma\autoref{lem:indices}, with $X = M$. It 
provides an isomorphism $f: \Eext_{\Lambda}^{m+2}(M,N) \cong \Eext^{m}(M, M)$ and $\Eext^k(M,N) = 0$ for $k> m+2$. 

Next, we apply part (\ref{lem:con.indices}) of Lemma\autoref{lem:indices}.  This time we take $X = M$ and use the vanishing obtained in the previous paragraph.  It provides an isomorphism $g: \Eext_{\Lambda}^{m+2}(N,N) \cong \Eext_{\Lambda}^{m+2}(M, N)$ and $\Eext_{\Lambda}^k(N,N) = 0$ for $k> m+2$. 

In particular, we see that $\ed(N) \leq m+2$.  Composing the isomorphisms $f$ and $g$ gives the desired isomorphism between  $\Eext_{\Lambda}^{m+2}(N,N)$ and $\Eext_{\Lambda}^m(M,M)$.  Since the latter is nontrivial, this gives $\ed(N) \geq m+2$, which completes the proof.  \end{proof}

We record the following observation as a corollary.

\begin{cor} \label{cor:ext.deg_component} Let $\Lambda$ be a symmetric algebra. If there exists a nonprojective module with finite extension degree in a component $\mathcal{C}$ of $\Gamma_{\Lambda}$, then all modules in $\mathcal{C}$ have finite extension degree.
\end{cor}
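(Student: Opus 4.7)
The plan is to propagate finiteness of the extension degree from $M$ throughout $\mathcal{C}$ by repeatedly invoking Lemma\autoref{lem:ed.of.middle.term} along AR sequences. The decisive point is that since $\Lambda$ is symmetric we have $\tau = \Omega^2$, so every AR sequence $0 \to \tau V \to E \to V \to 0$ ending at a non-projective $V$ has exactly the shape $0 \to \Omega^2 V \to E \to V \to 0$ required by that lemma.

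First I would record a syzygy invariance of $\ed$. For any non-projective $X$, the dimension shift (\ref{eq:dimshift}) gives $\Eext^i(\tau X, \tau X) \cong \Eext^i(X, X)$ for all $i \in \mathbb{Z}$; restricting to $i \geq 1$, where stable and ordinary $\Ext$ coincide, yields $\ed(\tau X) = \ed(X)$, and symmetrically $\ed(\tau^{-1} X) = \ed(X)$. Next comes the propagation step: if $V \in \mathcal{C}$ is non-projective with $\ed(V) < \infty$, Lemma\autoref{lem:ed.of.middle.term} applied to the AR sequence ending at $V$ gives $\ed(E) = \ed(V) + 2 < \infty$. Writing $E = \bigoplus_j X_j^{a_j}$, the group $\Ext^i(E,E)$ decomposes as a direct sum of copies of $\Ext^i(X_j, X_k)$, so every indecomposable summand $X_j$ of $E$ satisfies $\ed(X_j) \leq \ed(E) < \infty$. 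The AR sequence starting at $V$ is the AR sequence ending at $\tau^{-1} V$, whose extension degree is finite by the syzygy invariance, so the same argument yields finite $\ed$ for every summand of its middle term as well.

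Finally I would iterate from $M$. Projective modules $P$ satisfy $\ed(P) = 0$ trivially, and the stable part of $\mathcal{C}$ is a connected translation subquiver containing $M$, so repeatedly applying the propagation step traverses every non-projective vertex of $\mathcal{C}$ and finishes the proof. The main obstacle is that Lemma\autoref{lem:ed.of.middle.term} requires the right-hand term of the short exact sequence to be non-projective, so one cannot propagate directly through a projective-injective vertex; this is why the argument must be routed through the stable part of $\mathcal{C}$, where no projectives occur.
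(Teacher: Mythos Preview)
Your proposal is correct and follows essentially the same route as the paper's proof: both invoke Lemma~\ref{lem:ed.of.middle.term} on the almost split sequence $0 \to \Omega^2 V \to E \to V \to 0$ (available because $\tau = \Omega^2$ for symmetric $\Lambda$) to push finiteness of $\ed$ to every neighbour, and then appeal to connectedness of $\mathcal{C}$. The paper compresses this into two sentences; your explicit syzygy invariance $\ed(\tau^{\pm1}V)=\ed(V)$ and your remark about routing around projective-injective vertices are exactly the details the paper leaves implicit.
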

\begin{proof} Over a symmetric algebra every almost split sequence has the form of the short exact sequence in Lemma \ref{lem:ed.of.middle.term}.   The lemma tells us that if a non-projective module has finite extension degree then so do all of its neighbors in $\Gamma_{\Lambda}$.  The corollary then follows from the fact that $\mathcal{C}$ is a connected graph.  
\end{proof}

In view of the above corollary, it makes sense to discuss the finiteness of the extension degree of an AR component. We write $\ed( \mathcal{C}) < \infty$ when any (and therefore every) non-projective module in the component $\mathcal{C}$ has finite extension degree. Note that there need not exist a bound on the extension degree of the modules in the component.

We now set up some notation and terminology that we use throughout this section. First, fix a $\Lambda$-module $M$ and suppose that $\mathcal{C}$, is the component of the AR quiver containing $M$.  Let $\mathcal{C}_M \subset \mathcal{C}$ be the {\it cone of $M$}, it consists of all predecessors of $M$ in $\mathcal{C}$.  For a $\Lambda$-module $X \in \mathcal{C}_M$, we denote by $d_M(X)$, the length of the shortest directed path in $\mathcal{C}$ from $X$ to $M$.  Let $\mathcal{C}^0_M = \{M\}$, and for every $n \geq 0$ set \begin{align*} \mathcal{C}^{n+1}_M = \{ X \in \mathcal{C}_M \  | \ X \ {\rm is \ an \ immediate \ predecessor \ of \ some} \ Y \in \mathcal{C}_M^n\} \end{align*}
That is, $X \in \mathcal{C}_M^d$ if and only if there exists a directed path of length $d$ from $X$ to $M$.  In particular we see that $d_M(X) = d$ is a sufficient condition for $X$ to be in $\mathcal{C}_M^d$.  If a slice of $\mathcal{C}$ does not contain a cycle, this condition is necessary. Also, note that $\mathcal{C}_M = \bigcup_{n \geq 0} \mathcal{C}_M^n$, but the union need not be disjoint.

The diagram below shows the beginning of the cone of $M$ in an arbitrary component $\mathcal{C}$.  Each $\mathcal{C}_M^n$ appears above the column of modules it contains. \begin{align*}\xymatrix@C=15pt@R=10pt{%
 & \mathcal{C}_M^4  &\mathcal{C}_M^3&  \mathcal{C}_M^2 & \mathcal{C}_M^1& \mathcal{C}_M^0 \\
\cdots &\bullet \ar[r]& \bullet \ar[dr]  &&   && \\ 
&   \bullet\ar[dr] \ar[ur] \ar[r] &\bullet \ar[r]& \bullet\ar[dr]  & &  & \\ 
\cdots &\bullet \ar[r]& \bullet\ar[ur]\ar[dr] \ar[r] &\bullet \ar[r]& \bullet\ar[dr]  & \\ 
& \tau^2 M \ar[ur] \ar[dr]   \ar[r] &\bullet \ar[r]& \tau M \ar[ur] \ar[dr] \ar[r] &\bullet \ar[r]& M \\ 
 \cdots &\bullet \ar[r]& \bullet\ar[ur] \ar[dr]  \ar[r] &\bullet \ar[r]& \bullet\ar[ur] \\ 
  & \bullet \ar[dr] \ar[ur]\ar[r]& \bullet \ar[r] &\bullet\ar[ur]&   &  \\
   \cdots & \bullet \ar[r] & \bullet\ar[ur] && & \\ 
  &\bullet \ar[ur]&  &&  \\%
}\end{align*}

Since the Auslander-Reiten quiver $\Gamma_{\Lambda}$ is a locally finite graph, we have that each $\mathcal{C}_M^n$ contains finitely many modules up to isomorphism.  We denote by add$\mathcal{C}_M^n$ the subset of $\Lambda$-mod consisting of all direct summands of direct sums of modules in $\mathcal{C}^n_M$.  We say that a module $X$ generates add$\mathcal{C}_M^n$ if it contains each module in $\mathcal{C}_M^n$ as a direct summand.   Then we define $\ed(\mathcal{C}_M^n) = \ed(X)$, where $X$ is any generator of add$\mathcal{C}^n_M$.  Note that this definition is independent of the generator chosen.

\begin{prop} \label{prop:ed.of.phi} Let $\Lambda$ be a symmetric algebra and assume that $M$ is a non-projective $\Lambda$-module with $\ed(M) = m$.  Then for all $d\geq 0$ we have $\ed(\mathcal{C}_M^d) = m+2d$. 
\end{prop}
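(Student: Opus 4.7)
I would proceed by induction on $d$, with Lemma \ref{lem:ed.of.middle.term} as the inductive engine. The base case $d = 0$ is immediate from $\mathcal{C}_M^0 = \{M\}$ and the hypothesis $\ed(M) = m$.

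For the inductive step, assume $\ed(\mathcal{C}_M^d) = m+2d$ and let $X$ be a generator of $\operatorname{add}\,\mathcal{C}_M^d$, so that $\ed(X) = m+2d$. The idea is to assemble a single short exact sequence of the form required by Lemma \ref{lem:ed.of.middle.term} whose middle term generates $\operatorname{add}\,\mathcal{C}_M^{d+1}$. For each non-projective indecomposable summand $Y$ of $X$, take the almost split sequence
\[ 0 \to \tau Y \to E_Y \to Y \to 0, \]
and sum these to obtain a short exact sequence
\[ 0 \to \tau X_0 \to E \to X_0 \to 0, \]
where $X_0$ is the non-projective part of $X$ and $E = \bigoplus_Y E_Y$. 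Since $\Lambda$ is symmetric, $\tau X_0 \cong \Omega^2 X_0$ in $\Lmod$; and since projective summands contribute trivially to $\Ext^i$ for $i > 0$, we have $\ed(X_0) = \ed(X) = m + 2d$. Applying Lemma \ref{lem:ed.of.middle.term} to the sequence above therefore yields $\ed(E) = m+2d+2$.

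It remains to verify that $E$ is a generator of $\operatorname{add}\,\mathcal{C}_M^{d+1}$: by construction, the indecomposable summands of each $E_Y$ are precisely the immediate predecessors of $Y$ in $\Gamma_\Lambda$, and every module in $\mathcal{C}_M^{d+1}$ is, by definition, an immediate predecessor of some $Y \in \mathcal{C}_M^d$; whenever such a $Y$ can be chosen non-projective the module then appears as a summand of $E$. Because $\ed$ does not depend on the particular generator of $\operatorname{add}\,\mathcal{C}_M^{d+1}$ chosen, this gives $\ed(\mathcal{C}_M^{d+1}) = \ed(E) = m+2d+2$ and closes the induction.

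The main point that requires care is handling projective-injective modules $P \in \mathcal{C}_M^d$, since no almost split sequence ends at such a $P$ and its immediate predecessors (summands of $\rad(P)$) are not a priori captured by the sum above. The symmetric hypothesis rescues the argument: the almost split sequence ending at $P/\operatorname{soc}(P)$ has middle term $\rad(P)/\operatorname{soc}(P) \oplus P$, so $\rad(P)/\operatorname{soc}(P)$ is a non-projective element of $\mathcal{C}_M^d$, and the irreducible maps $\rad(P) \to \rad(P)/\operatorname{soc}(P)$ show that the missing predecessors of $P$ already appear as summands of $E_{\rad(P)/\operatorname{soc}(P)}$. Since adding or removing projective summands does not affect positive Ext groups, the extension-degree computation is unaffected, and the inductive argument goes through as stated.
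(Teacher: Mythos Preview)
Your proof is correct and follows essentially the same approach as the paper: induction on $d$, summing the almost split sequences ending at the indecomposable modules of $\mathcal{C}_M^d$ to obtain a sequence $0 \to \Omega^2 X \to E \to X \to 0$, and then invoking Lemma~\ref{lem:ed.of.middle.term}. The paper's proof simply writes ``let $E_i$ be the middle term of the almost split sequence ending in $X_i$'' without separating out the projective summands; your treatment of the projective--injective case (via the almost split sequence $0 \to \operatorname{rad}P \to \operatorname{rad}P/\operatorname{soc}P \oplus P \to P/\operatorname{soc}P \to 0$) is additional care that the paper omits.
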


\begin{proof}  We proceed by induction on $d$. There is nothing to show when $d = 0$. So, assume the claim holds when $d=k \geq 0$.

Choose a module $X=\bigoplus X_i$ that generates add$\mathcal{C}^k_M$ where each $X_i$ is indecomposable.  Denote by $E_i$ be the middle term of the almost split sequence ending in $X_i$ and set $E= \oplus E_i$.  Since a module is in $\mathcal{C}_M^{k+1}$ if and only if it is an immediate predecessor of some module in $\mathcal{C}_M^k$ and $X$ generates add$\mathcal{C}_M^k$ we see that $E$ generates add$\mathcal{C}_M^{k+1}$.  Thus, $\ed(\mathcal{C}_M^{k+1}) = \ed(E)$.  Summing the almost split sequences we obtain an exact sequence:  \begin{align*}  0 \to  \Om^2 X \to  E  \to  X \to 0 \end{align*}  The induction hypothesis gives $\ed(X) = \ed(\mathcal{C}_M^k) = m+2k$.  Finally, an application of  Lemma\autoref{lem:ed.of.middle.term} yields $\ed(E)= m+2(k+1)$ as needed. \end{proof}

In the next two results we show that a component in the AR quiver of a symmetric algebra satisfying some finiteness conditions may not contain a non-projective module with eventually vanishing self-extensions.

\begin{thm} \label{thm:tubes}
Let $\Lambda$ be a symmetric algebra and assume that $\mathcal{C}$ is a component of $\Gamma_{\Lambda}$ containing a $\tau$-periodic module. Then any module with eventually vanishing self-extensions in $\mathcal{C}$ must be projective.  
\end{thm}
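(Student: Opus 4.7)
The plan is to argue by contradiction, and in fact to show something slightly stronger: the $\tau$-periodic module in $\mathcal{C}$ itself (call it $P$) already has nonzero self-extensions in infinitely many degrees, so no module of $\mathcal{C}$ can have eventually vanishing self-extensions unless it is projective. First I note that a $\tau$-periodic module is necessarily non-projective, since $\tau$ is zero on projectives under the standard convention. Next, if some non-projective $M \in \mathcal{C}$ had eventually vanishing self-extensions (hence $\ed(M) < \infty$), then Corollary \ref{cor:ext.deg_component} would force every module of $\mathcal{C}$ to have finite extension degree; in particular $\ed(P) < \infty$. So it suffices to derive a contradiction from $\ed(P) < \infty$.

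To do this I would use the symmetric identity $\tau = \Om^2$ to translate periodicity of $P$ into periodicity of its syzygies: if $\tau^p P \cong P$, then $\Om^{2p} P \cong P$. Plugging this into the dimension shift (\ref{eq:dimshift}) with $m = 2p$ and $n = 0$ gives
$$\Eext^i(P,P) \;\cong\; \Eext^{i - 2p}(\Om^{2p} P, P) \;\cong\; \Eext^{i-2p}(P,P),$$
so $\Eext^*(P,P)$ is $2p$-periodic in the degree index. Since $P$ is non-projective, $\Eext^0(P,P) = \StHom(P,P)$ is nonzero (it contains the class of $\id_P$). Periodicity then gives $\Eext^{2pk}(P,P) \neq 0$ for every $k \geq 1$, and since $\Eext^i = \Ext^i$ in positive degrees, we obtain $\Ext^{2pk}(P,P) \neq 0$ for all $k \geq 1$, contradicting $\ed(P) < \infty$.

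There is no real obstacle in this argument; everything is forced once one combines the symmetric identity $\tau = \Om^2$ with the dimension shift and the non-vanishing of $\StHom(P,P)$ for non-projective $P$. The only subtle point is the observation that any $\tau$-periodic module is automatically non-projective, which is what makes the periodicity of $\Eext^*(P,P)$ incompatible with eventual vanishing. Notably, this argument does not require the Happel--Preiser--Ringel theorem or any classification of tubes: it is enough to exhibit a single non-projective module in $\mathcal{C}$ whose self-extensions do not eventually vanish, and Corollary \ref{cor:ext.deg_component} then rules out any other such module in the same component.
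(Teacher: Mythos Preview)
Your argument is correct and rests on the same core computation as the paper: over a symmetric algebra $\tau=\Om^2$, so $\tau$-periodicity gives $\Om$-periodicity, and then the dimension shift \eqref{eq:dimshift} together with $\StHom(P,P)\neq 0$ for non-projective $P$ forces $\Eext^{2pk}(P,P)\neq 0$ for all $k$.

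There is a small but genuine difference in how the reduction is organized. The paper invokes the standard fact that if one module in $\mathcal{C}$ is $\tau$-periodic then \emph{every} non-projective module in $\mathcal{C}$ is $\tau$-periodic, and then runs the periodicity computation directly on the given module $M$. You instead keep the single periodic module $P$ fixed and use Corollary~\ref{cor:ext.deg_component} to transport the finite extension degree from $M$ to $P$. Your route is slightly more self-contained relative to the paper (it avoids citing the ``periodicity spreads through the component'' fact), while the paper's route yields a uniform period $n$ for all of $\mathcal{C}$ and dispenses with the contradiction framing. Either way the substance is the same.
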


\begin{proof} If $\mathcal{C}$ contains a $\tau$-periodic module, then every non-projective module in $\mathcal{C}$ is $\tau$ periodic and therefore $\Om$-periodic, as $\Lambda$ is symmetric.  Let $n$ be so that  $\Om^nX \cong X$ for all $X \in \mathcal{C}$.  Then, if $M \in \mathcal{C}$ and $\ed(M) < \infty$, for all $k \gg 0$ we have \begin{align*} 0 = \Eext^{kn}(M,M) \cong \StHom(\Om^{kn}M,M) \cong \StHom(M,M) \end{align*}  This occurs only when $M$ is projective.
\end{proof}

\begin{thm} \label{thm:bands} Let $\Lambda$ be a symmetric algebra.  Assume that the $\mathcal{C}$ is a component of $\Gamma_{\Lambda}$ that has finitely many $\tau$-orbits and that its slice does not contain an unoriented cycle. Then any module with eventually vanishing self-extensions in $\mathcal{C}$ must be projective. 
\end{thm}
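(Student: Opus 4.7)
The plan is to argue by contradiction, combining \autoref{prop:ed.of.phi} (which forces the extension degree to grow along the cone of $M$) with the finiteness of the number of $\tau$-orbits (which forces the extension degree to be bounded on $\mathcal{C}$).

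Assume $\mathcal{C}$ contains a non-projective module $M$ with $\ed(M) = m < \infty$. First, \autoref{cor:ext.deg_component} guarantees that every non-projective module in $\mathcal{C}$ has finite extension degree. Second, since $\Lambda$ is symmetric we have $\tau = \Omega^2$, so the dimension shift~(\ref{eq:dimshift}) yields
\[ \Eext^i(\tau X,\tau X) \cong \Eext^i(X,X) \]
for every $i$ and every non-projective $X \in \mathcal{C}$; hence $\ed$ is constant along each $\tau$-orbit in $\mathcal{C}$. Since $\mathcal{C}$ has only finitely many $\tau$-orbits and $\ed$ takes a fixed finite value on each of them, there is a uniform bound $E$ with $\ed(X) \leq E$ for every non-projective $X \in \mathcal{C}$.

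Now \autoref{prop:ed.of.phi} asserts $\ed(\mathcal{C}_M^d) = m + 2d$ for every $d \geq 0$, so for each $d$ some indecomposable summand $X_d$ of a generator of $\mathrm{add}\,\mathcal{C}_M^d$ satisfies $\ed(X_d) = m + 2d$. Choosing $d$ so that $m + 2d > \max(E,0)$, the module $X_d$ has a positive self-extension and is therefore non-projective, yet $\ed(X_d) > E$, contradicting the uniform bound. This contradiction shows no such $M$ exists, so every module in $\mathcal{C}$ with eventually vanishing self-extensions must be projective.

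The argument is essentially a pigeonhole once \autoref{prop:ed.of.phi} is available, so I do not expect a serious obstacle. The one step that requires care is confirming that the indecomposable $X_d$ produced by the proposition is genuinely non-projective, so that the bound $E$ applies to it; this is automatic as soon as $m + 2d > 0$. The hypothesis that the slice contains no unoriented cycle enters only implicitly, via the setup in which $\mathcal{C}_M^d$ is cleanly governed by the distance function $d_M$ preceding \autoref{prop:ed.of.phi}.
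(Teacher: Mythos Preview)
Your argument has a genuine gap at the step where you extract an indecomposable summand $X_d$ with $\ed(X_d) = m + 2d$. The quantity $\ed(\mathcal{C}_M^d)$ is by definition $\ed(Y)$ for a generator $Y$ of $\mathrm{add}\,\mathcal{C}_M^d$, and for a direct sum $Y = \bigoplus_i Y_i$ one has
\[
\Ext^k(Y,Y) \;=\; \bigoplus_{i,j} \Ext^k(Y_i, Y_j),
\]
so $\ed(Y)$ records the top degree in which \emph{some} pairing $\Ext^k(Y_i,Y_j)$ is nonzero, not merely the diagonal ones. There is no reason the maximum must be attained on a diagonal term; in general $\ed\bigl(\bigoplus_i Y_i\bigr)$ can strictly exceed $\max_i \ed(Y_i)$. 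Your uniform bound $E$ controls only the diagonal terms $\ed(Y_i)$, so it does not force $\ed(Y) \le E$, and the contradiction does not follow.

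A symptom of this gap is that your argument never really uses the hypothesis that the slice has no unoriented cycle---you flag it only as entering ``implicitly,'' but nothing in your chain of deductions invokes it. The paper's proof uses that hypothesis in an essential way: it guarantees that $X \in \mathcal{C}_M^d$ if and only if $d_M(X)=d$, and from this one obtains the \emph{equality of sets} $\mathcal{C}_{\tau M}^{\,n} = \mathcal{C}_M^{\,n+2}$ once $n$ is the number of $\tau$-orbits. Since these sets coincide, so do their generators, and applying \autoref{prop:ed.of.phi} to both $M$ and $\tau M$ (using $\ed(\tau M)=\ed(M)$) yields $m+2(n+2)=m+2n$, the desired contradiction. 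This sidesteps the decomposition issue entirely by comparing the extension degree of the \emph{same} module computed in two ways, rather than trying to pass to an indecomposable summand.
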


\begin{proof} We may assume that $\mathcal{C}$ does not contain a $\tau$-periodic module, otherwise the previous proposition applies.  Suppose that $\mathcal{C}$ has $n$ $\tau$-orbits and $M$ is any non-projective module in $\mathcal{C}$. 

Since a slice of $\mathcal{C}$ does not contain a cycle, we have that a module $X$ is in $\mathcal{C}_M^d$ if and only if $d_M(X) = d$.  Also note that for all $d \geq 0$ there is a containment $\mathcal{C}_{\tau M}^d \subseteq \mathcal{C}_M^{d+2}$.  For small values of $d$, this containment may be strict.  Modules on a slice containing $M$ are not in the cone of $\tau M$ and these are the modules in $\mathcal{C}_M^{d+2} \backslash \mathcal{C}_{\tau M}^d$. 

Since $\mathcal{C}$ has only $n$ $\tau$-orbits, the distance from any module on a slice containing $M$ to $M$ is at most $n$.  Therefore, $\mathcal{C}_M^{n+2}$ is contained in the cone of $\tau M$ and so $\mathcal{C}_{\tau M}^n = \mathcal{C}_M^{n+2}$.

Proposition\autoref{prop:ed.of.phi} gives the equalities $\ed(\mathcal{C}_M^{n+2}) = \ed(M) + 2(n+2)$ and $\ed(\mathcal{C}_{\tau M}^n) = \ed(\tau M) + 2n$.  Since $\ed(M) = \ed(\tau M)$ and $\mathcal{C}_{\tau M}^n = \mathcal{C}_M^{n+2}$, we get $\ed(M) + 2(n+2) = \ed(M) +2n$ so that $M$ must have infinite extension degree.  \end{proof}


The following observation about the cone of a module $M$ with eventually vanishing self-extensions is needed in the proof of the next proposition.  We record it separately as it provides a source of finitely generated modules over the graded artin algebra $\Ext^*(M,M)$.

\begin{lem}  \label{lem:cone}Let $\Lambda$ be a symmetric algebra.  Suppose that $M$ is a $\Lambda$-module with $\ed(M)=m$ and $Y \in \mathcal{C}_M$. Then \begin{enumerate}[(i)] %
 \item $\Eext_{\Lambda}^k(Y,M) = 0$ for $k > m$, and  
 \item $\Eext_{\Lambda}^k(M,Y) = 0$ for $k > m+2 d_M(Y)$. \end{enumerate}
 \end{lem}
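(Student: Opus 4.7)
My plan is to prove both parts simultaneously by induction on $d = d_M(Y)$, using Lemma\autoref{lem:indices} applied to the almost split sequences in $\mathcal{C}$.

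For the base case $d = 0$, we have $Y = M$, and both statements reduce to $\Eext^k(M, M) = 0$ for $k > m$, which is exactly the definition of $\ed(M) = m$.

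For the inductive step, assume both parts hold for every $Y' \in \mathcal{C}_M$ with $d_M(Y') < d$, and let $Y \in \mathcal{C}_M$ satisfy $d_M(Y) = d \geq 1$. Choose a shortest directed path $Y \to Y_{d-1} \to \cdots \to Y_0 = M$; by minimality the intermediate module $Y_{d-1}$ satisfies $d_M(Y_{d-1}) = d-1$. Provided $Y_{d-1}$ is non-projective, the almost split sequence ending at $Y_{d-1}$ takes the form
\[
0 \to \Omega^2 Y_{d-1} \to E \to Y_{d-1} \to 0,
\]
since $\Lambda$ is symmetric (so $\tau = \Omega^2$), and $Y$ is a direct summand of the middle term $E$. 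This is precisely the type of sequence to which Lemma\autoref{lem:indices} applies. For part (i), the inductive hypothesis gives $\Eext^k(Y_{d-1}, M) = 0$ for $k > m$; applying part (\ref{lem:con.indices}) of Lemma\autoref{lem:indices} with $X = M$ and $n = m$ yields $\Eext^k(E, M) = 0$ for $k > m$, and hence $\Eext^k(Y, M) = 0$ for $k > m$. For part (ii), the inductive hypothesis gives $\Eext^k(M, Y_{d-1}) = 0$ for $k > m + 2(d-1)$; applying part (\ref{lem:cov.indices}) of Lemma\autoref{lem:indices} with $X = M$ and $n = m + 2(d-1)$ yields $\Eext^k(M, E) = 0$ for $k > m + 2d$, and the same holds for the summand $Y$. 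This matches the bound $k > m + 2 d_M(Y)$ asserted in the lemma.

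The main obstacle I anticipate is dealing with any projective modules that occur on the chosen path, since almost split sequences are only defined for non-projective terminal modules. However, the bounds in (i) and (ii) are trivially satisfied whenever the source or target is projective, because $\Eext^k$ vanishes in all degrees against a projective module. Thus projective $Y_i$ along the path can either be bypassed by rerouting through $\mathcal{C}_M$, or handled separately via the inclusion $\rad(Y_i) \hookrightarrow Y_i$ when $Y_i$ is projective-injective. I expect this to amount to a minor technical adjustment rather than a substantially different argument, so the inductive scheme above captures the essential content of the proof.
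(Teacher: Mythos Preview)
Your proposal is correct and follows essentially the same inductive scheme as the paper's proof: induct on $d_M(Y)$, pick an immediate successor $X=Y_{d-1}$ with $d_M(X)=d-1$, and apply the two parts of Lemma~\ref{lem:indices} to the almost split sequence $0\to\Omega^2 X\to E\to X\to 0$ with $Y$ a summand of $E$. Your remark about projective intermediaries is even slightly more careful than the paper, which passes over this point in silence; note that when $Y_{d-1}=P$ is projective one has $Y$ a summand of $\rad P\cong\Omega^2(P/\operatorname{soc}P)$ in the stable category, so a direct dimension shift against $Y_{d-2}=P/\operatorname{soc}P$ handles this case without rerouting.
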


\begin{proof}  We proceed by induction on $d_M(Y)$.  When this is zero the claims are clear.  So we assume that the proposition holds for all modules of distance $d \geq 0$ to $M$.  If $d_M(Y) = d+1$ then $Y$ has a neighbor, say $X$, in $\mathcal{C}_M$ with $d_M(X) = d$.  The almost split sequence ending in $X$ is of the form $ 0 \rightarrow \Om^2 X \rightarrow Y \oplus Y' \rightarrow X  \rightarrow 0$ where $Y' \in \Lmod$.  

For (i), we have $\Ext^k(X,M) = 0$ for $k>m$, by the induction hypothesis.  Part (\ref{lem:con.indices}) of Lemma \ref{lem:indices} applied to the above sequence  shows that $\Ext^k(Y,M) = 0$ for $k>m$, as claimed. 

For (ii), the induction hypothesis gives $\Ext^k(M,X) = 0$ for $k >m+ 2d$.  An application of part (\ref{lem:cov.indices}) of Lemma \ref{lem:indices} to the above sequence shows that $\Ext^k(M,Y) = 0$ for $k>m+ 2d+2$, as claimed. \end{proof}

In order to classify the extension degree of all modules in a component of stable type $\mathbb{Z}A_{\infty}$, we will need the following result. 

\begin{prop}  \label{prop:jump} Let $\Lambda$ be a symmetric artin algebra. Take a short exact sequence $0 \rightarrow \Om^2 M \rightarrow N \oplus L \rightarrow M \rightarrow 0$  in $\Lmod$ with $m = \ed(M)$ and $n= \ed(N)$. If $n<m<\infty$, then $\ed(L)=m+2$ and 
 $\Eext^{m+2}(L,L) \cong \Eext^m(M,M)$.  
\end{prop}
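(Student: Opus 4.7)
The plan is to apply Lemma \ref{lem:ed.of.middle.term} to the given short exact sequence and then use the hypothesis $n < m$ to extract the summand $\Eext^{m+2}(L,L)$ from the resulting master isomorphism. Applying Lemma \ref{lem:ed.of.middle.term} directly to $0 \to \Om^2 M \to N \oplus L \to M \to 0$ yields $\ed(N \oplus L) = m + 2$ and $\Eext^{m+2}(N \oplus L, N \oplus L) \cong \Eext^m(M, M)$. Expanding the left-hand side as a direct sum of the four Ext groups between $N$ and $L$, and using that $\ed(N) = n < m + 2$ forces $\Eext^{m+2}(N, N) = 0$, I obtain
\[
\Eext^{m+2}(N, L) \oplus \Eext^{m+2}(L, N) \oplus \Eext^{m+2}(L, L) \cong \Eext^m(M, M),
\]
so in particular $\ed(L) \leq m + 2$.

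Next I apply Lemma \ref{lem:indices} in several ways to identify the two mixed summands. Part (ii) with $X = M$ gives $\Eext^k(N, M) = 0$ for $k > m$; combined with part (i) applied with $X = N$, this yields $\Eext^{m+2}(N, L) \cong \Eext^m(N, M)$. Symmetrically, part (i) with $X = M$ gives $\Eext^k(M, N) = 0$ for $k > m + 2$; combined with part (ii) applied with $X = N$, this yields $\Eext^{m+2}(L, N) \cong \Eext^{m+2}(M, N)$.

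To finish the proof I must show both $\Eext^m(N, M) = 0$ and $\Eext^{m+2}(M, N) = 0$, which will kill the two mixed summands and give the desired isomorphism $\Eext^{m+2}(L, L) \cong \Eext^m(M, M)$; the equality $\ed(L) = m + 2$ then follows at once since $\Eext^m(M, M) \neq 0$. The main obstacle will be extracting these two vanishings purely from the hypothesis $\ed(N) = n < m$. My plan is to apply Lemma \ref{lem:indices} to an auxiliary short exact sequence built from $N$ — for instance the almost split sequence ending in $N$, whose middle term has extension degree $n + 2$ by Lemma \ref{lem:ed.of.middle.term} — and to iterate, using the dimension shift isomorphism to push the degree of the Ext group in question down into a range where the hypothesis $\ed(N) = n$ forces it to vanish.
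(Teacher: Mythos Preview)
Your reduction is correct and in fact slightly cleaner than the paper's: the paper attacks $\Eext^{m+2}(L,N)$ and $\Eext^{m+2}(N,L)$ directly, whereas you first translate these via Lemma~\ref{lem:indices} into the cross-terms $\Eext^{m+2}(M,N)$ and $\Eext^{m}(N,M)$, which involve only $M$ and $N$.

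The gap is in the last paragraph. Your plan to ``iterate'' with the almost split sequence ending in $N$ and ``push the degree down'' does not work as stated: knowing only $\ed(N)=n$ tells you about $\Eext^*(N,N)$, and applying Lemma~\ref{lem:indices} to $0\to\Om^2N\to E_N\to N\to 0$ just moves you to Ext groups involving the new module $E_N$, not back toward something controlled by $n$. Nothing here forces $\Eext^m(N,M)$ or $\Eext^{m+2}(M,N)$ to vanish unless you link $N$ and $M$ through the AR quiver.

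The missing observation --- used implicitly in the paper and needed in your approach as well --- is that the given sequence is an almost split sequence (this is how the proposition is applied in Theorem~\ref{thm:ZAinfty}), so each indecomposable summand $N_i$ of $N$ is an immediate predecessor of $M$. Equivalently, $M$ lies in the cone $\mathcal{C}_{\Om^{-2}N_i}$ at distance~$1$. Now Lemma~\ref{lem:cone}, applied with $\ed(\Om^{-2}N_i)\le n$, gives $\Eext^k(M,\Om^{-2}N_i)=0$ for $k>n$ and $\Eext^k(\Om^{-2}N_i,M)=0$ for $k>n+2$; dimension-shifting yields $\Eext^k(M,N_i)=0$ for $k>n+2$ and $\Eext^k(N_i,M)=0$ for $k>n$. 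Both of your target groups then vanish since $m>n$, and no iteration is required. (Concretely, this is one application of Lemma~\ref{lem:indices} to the AR sequence $0\to N_i\to M\oplus(\text{other})\to\Om^{-2}N_i\to 0$ with $X=N_i$, once you recognize that $M$ sits in its middle term.) The paper carries out the same idea with $L_j$ in place of $M$, at distance~$2$ inside $\mathcal{C}_{\Om^{-2}N_i}$.
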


\begin{proof}   Proposition\autoref{lem:ed.of.middle.term} gives $\ed(L)\leq m+2$ and \begin{align*}\Eext^m(M,M) \cong \Eext^{m+2}(N\oplus L, N\oplus L). \end{align*} To complete the proof, it suffices to show that the following summands of the right hand term are all zero: $\Eext^{m+2}(N, N)$, $\Eext^{m+2}(L, N)$, and $\Eext^{m+2}(N, L)$.  

First, $\Eext^{m+2}(N,N)=0$ is immediate since $m>n=\ed(N)$. We write $N = \bigoplus N_i$ and $L = \bigoplus L_j$ where each $N_i$ and $L_j$ are indecomposable.  Note that $\ed(N_i) \leq n$ for each $i$ and we have the following subgraph of $\Gamma_{\Lambda}$ for each $i$ and $j$: \begin{align*} \xymatrix@R=2pt{N_i \ar[dr] & & \Om^{-2}N_i \\ & M \ar[dr] \ar[ur] & \\ L_j  \ar[ur] & & \Om^{-2}L_j  }\end{align*}  So we see that $L_j$ is in $\mathcal{C}_{\Om^{-2}N_i}$ and $d_{\Om^{-2}N_i} \leq 2$ for each $i$ and $j$.

Since $L_j \in \mathcal{C} _{\Om^{-2}N_i}$ we may employ Lemma\autoref{lem:cone} and then shift dimensions to get $0=\Eext^k(L_j,\Om^{-2}N_i)=\Eext^{k+2}(L_j,N_i)$ for all $k>n$, that is $\Eext^{k}(L_j,N_i)=0$ for all $k>n+2$. In particular, we get $\Eext^{m+2}(L,N)=0$.

Next, as $d_{\Om^{-2}N_i}(L_j)=2$, Lemma\autoref{lem:cone} along with a dimension shift give us $0=\Eext^k(\Om^{-2}N_i, L_j)=\Eext^{k-2}(N_i,L_j)$ for all $k>n+4$ which is to say $\Eext^k(N_i,L_j)=0$ for all $k>n+2$. Therefore, we get $\Eext^{m+2}(N,L)=0$. \end{proof}

Until this point, we have been careful to track the degrees where the vanishing of extensions begins.  This attention to detail is used repeatedly in the sequel.  In some instances, however, the following notation can simplify our arguments considerably.

\begin{defn} \label{defn:perp}  If $\Ext^i(M,N) = 0$ for all $i \gg 0$ we write $M \perp N$.  We set $^{\perp}M =\{ X \ | \ X \perp M \} $ and $M^{\perp} = \{Y \ | \ M \perp Y\}$. \end{defn}

\begin{rem} \label{rmk:perp}

(i) For each $\Lambda$-module $M$, the sets $^{\perp}M$ and $M^{\perp}$ satisfy the two out of three property on short exact sequences.  That is, if the sequence  $0 \to X \to Y \to Z \to 0$ is exact and any two modules from  $\{X,Y,Z\}$ are in $^{\perp}M$ or $M^{\perp}$, then so is the remaining module.  

(ii) If $\ed(M) < \infty$ and $N$ is any module in the same component of the Auslander-Reiten quiver as $M$, then it follows from Lemma \ref{lem:cone} that $M \perp N$ and $N \perp M$.
 \end{rem}

\begin{prop} \label{prop:perp} Let $\Lambda$ be a symmetric artin algebra. Assume that $N$ is an indecomposable $\Lambda$-module of finite extension degree.  Let $f: M \to N$ be an irreducible epimorphism (monomorphism), then  $\ker f$  (respectively, $\coker f$) has finite extension degree. \label{prop:perp} \end{prop}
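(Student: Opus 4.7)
The plan is to combine the two-out-of-three property of the classes $M^{\perp}$ and ${}^{\perp}M$ recorded in Remark~\ref{rmk:perp}(i) with the observation that $M$ and $N$ must lie in the same AR component. In the epimorphism case, $N$ cannot be projective, for any epimorphism onto an indecomposable projective splits and would contradict irreducibility. Hence the almost split sequence ending in $N$ exists, and the standard factorization argument through it shows that every indecomposable summand of $M$ is a summand of its middle term and therefore lies in the same AR component $\mathcal{C}$ as $N$. By Corollary~\ref{cor:ext.deg_component} we have $\ed(\mathcal{C}) < \infty$, and applying Remark~\ref{rmk:perp}(ii) to each pair of indecomposable summands of $M \oplus N$ yields the four relations $M \perp M$, $N \perp N$, $M \perp N$, and $N \perp M$.

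With these in hand, I would apply two-out-of-three three times to the short exact sequence $0 \to \ker f \to M \to N \to 0$. From $M, N \in M^{\perp}$ one concludes $\ker f \in M^{\perp}$, i.e.\ $M \perp \ker f$; from $M, N \in N^{\perp}$ one concludes $\ker f \in N^{\perp}$, i.e.\ $N \perp \ker f$. Together these say $M, N \in {}^{\perp}(\ker f)$, and a third application of two-out-of-three to the same sequence, now inside the class ${}^{\perp}(\ker f)$, yields $\ker f \in {}^{\perp}(\ker f)$, which is precisely $\ed(\ker f) < \infty$.

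The monomorphism case is dual. Because $\Lambda$ is symmetric, indecomposable injective and projective modules coincide, so the same splitting argument as above rules out $M$ having an indecomposable projective (equivalently, injective) summand. The almost split sequence starting at $M$ therefore exists, $N$ is a summand of its middle term, and $M$ and $N$ again lie in a common AR component with all four $\perp$-relations. The identical three-step chase applied to $0 \to M \to N \to \coker f \to 0$ then gives $\ed(\coker f) < \infty$. The main obstacle is purely bookkeeping: keeping the two variances of $\Ext$ straight when invoking Remark~\ref{rmk:perp}(i); no analysis of the shape of $\mathcal{C}$ or of syzygy behaviour is required.
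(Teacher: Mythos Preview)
Your proof is essentially the paper's: once $M$ and $N$ lie in the same AR component, Remark~\ref{rmk:perp}(ii) supplies the four $\perp$-relations, and your three applications of two-out-of-three to the short exact sequence are exactly the paper's argument. One small slip in the monomorphism case: ruling out injective summands of $M$ does not make $M$ indecomposable, so ``the almost split sequence starting at $M$'' need not exist; it is cleaner to argue, as in your epimorphism case, via the almost split sequence ending in the indecomposable $N$ (or the sink map $\rad N \hookrightarrow N$ when $N$ is projective), which already places every indecomposable summand of $M$ in the component of $N$.
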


\begin{proof}  Since $f$ is irreducible, the modules $M$ and $N$ are in the same component of $\Gamma_{\Lambda}$.  In particular, this gives $\ed(M) < \infty$ and $M \perp N$ and $N \perp M$.  

We prove the proposition in the case when $f$ is an epimorphism. A similar argument works when $f$ is a monomorphism.  Since $f$ is an epimorphism, we have a short exact sequence $0 \to \ker f \to M \to N \to 0$.  Since $M \perp M$ and $M \perp N$ the two out of three property gives $M \perp \ker f$.  Similarly, since $N \perp N$ and $N \perp M$, the two out of three property gives $N \perp \ker f$. One more application of the two out of three property now gives $\ker f \perp \ker f$.
\end{proof}


\section{Eventually $\Omega$-perfect components} \label{sec:Omega_perfect}

In order to determine the extension degree of modules in all of the possible types of AR components, we separate the components into two classes: those that are eventually $\Omega$-perfect and those that are not eventually $\Omega$-perfect. 
This section is dedicated to the study of the AR components that are eventually $\Om$-perfect. We show that any such component of finite extension degree must be stably $\mathbb{Z}A_{\infty}$. In addtion, we describe the extension degree of each module in such a component.

Recall that an irreducible map $f: M \to N$ is called {\it $\Omega$-perfect} if the induced irreducible maps $\Omega^n f: \Omega^n M \to \Omega^n N $ are either all monomorphisms, for every $n\geq0$, or are all epimorphisms, for every $n\geq0$. An indecomposable non-projective module $M$ is $\Omega$-perfect if all irreducible homomorphisms ending at $M$ and beginning in $\tau M$ are $\Omega$-perfect. We say that an AR component $\mathcal{C}$ is eventually $\Omega$-perfect if for each $M \in \mathcal{C}$ there is an $i \in \mathbb{N}$ so that $\tau^i M$ is $\Omega$-perfect. 

The concepts of an $\Omega$-perfect map and an $\Omega$-perfect module where introduced in \cite{GZ1}. The authors also showed that $\Omega$-perfect modules occur quite often. In particular, if an algebra has no periodic simple modules, then every indecomposable non-projective module is eventually $\Omega$-perfect \cite[Proposition 2.4]{GZ2}.  

Throughout the section the {\it valence} of a vertex in a stable AR component is the valence of the vertex in the graph of a slice of the stable component containing it. Equivalently, the valence denotes the number of irreducible maps ending at the vertex in the stable AR quiver.  We denote the {\it length} of the $\Lambda$-module $M$ by $\ell(M)$.

We now begin working with stable AR components that are eventually $\Om$-perfect.

\begin{prop} \label{prop:boundary.exists} Let $\Lambda$ be a symmetric algebra. Suppose that $\mathcal{C}$ is an eventually $\Omega$-perfect component in the stable AR quiver of $\Lambda$ containing a module of finite extension degree.  Let $M$ be an $\Omega$-perfect module in $\mathcal{C}$ of minimal length.  Then $\alpha(M) = 1$.\end{prop}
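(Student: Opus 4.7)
The plan is to argue by contradiction. Assume $\alpha(M) \geq 2$, so the almost split sequence ending at $M$ has the form
\[
0 \to \Omega^2 M \to E_1 \oplus \cdots \oplus E_r \to M \to 0
\]
with $r \geq 2$ non-projective indecomposable summands $E_i$. Since $M$ is $\Omega$-perfect, the irreducible maps $f_i \colon E_i \to M$ are simultaneously all monomorphisms or all epimorphisms, and this persists under $\Omega^n$ for every $n \geq 0$. The objective throughout is to produce an $\Omega$-perfect module in $\mathcal{C}$ of length strictly less than $\ell(M)$, contradicting minimality.

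I would first treat the monomorphism case. Each $f_i$ is a proper embedding (an irreducible map is never an isomorphism), so $\ell(E_i) \leq \ell(M) - 1$. Because $\tau = \Omega^2$ preserves the $\Omega$-perfect property together with the mono structure, the almost split sequence at $\tau^n M$ has the same shape, with each $\tau^n E_i \hookrightarrow \tau^n M$ still a proper monomorphism, and every $\tau^n M$ is itself $\Omega$-perfect of valence $r$. The length identity from that sequence reads
\[
\ell(\tau^n M) + \ell(\tau^{n+1} M) = \sum_{i=1}^r \ell(\tau^n E_i) \leq r\bigl(\ell(\tau^n M) - 1\bigr),
\]
and rearranges to $\ell(\tau^{n+1} M) \leq (r-1)\ell(\tau^n M) - r$. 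When $r = 2$ this gives $\ell(\tau^{n+1} M) \leq \ell(\tau^n M) - 2$, producing a strictly decreasing sequence of positive integers, which is impossible. A minor technical point is to handle a possible projective summand of the middle term at $n=0$, but such a summand is killed by $\Omega^2$, so the iteration works from $n=1$ onward.

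For the remaining cases, namely the epimorphism case and the subcase $r \geq 3$ (which can only arise at a trivalent vertex of a $\mathbb{Z}D_\infty$-shaped component), the crude length estimate above does not force a strict decrease. Here the strategy is to invoke the eventually $\Omega$-perfect hypothesis: for each $E_i$ pick the minimal $k_i \geq 0$ with $\tau^{k_i} E_i$ being $\Omega$-perfect. If some $k_i = 0$ then $E_i$ is itself $\Omega$-perfect with $\ell(E_i) < \ell(M)$, contradicting minimality outright. Otherwise one combines the strict inclusion $\tau^{k_i} E_i \hookrightarrow \tau^{k_i} M$ (strict because the $\Omega^{2k_i}$-shift of $f_i$ is still irreducible hence not an isomorphism) with the minimality bound $\ell(\tau^{k_i} E_i) \geq \ell(M)$ to push constraints along the $\tau$-orbit of $M$. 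The epimorphism case is handled dually, via the almost split sequence $0 \to M \to E' \to \tau^{-1} M \to 0$ starting at $M$ together with the $\Omega^{-1}$-structure available on the symmetric algebra $\Lambda$.

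The hard part is precisely closing these remaining cases. The length estimate that collapses cleanly for $r = 2$ no longer forces a decrease when $r \geq 3$, and in the epimorphism case the $\tau$-orbit lengths grow rather than shrink, so minimality of $\ell(M)$ works against us rather than with us. A more delicate analysis, combining the eventually $\Omega$-perfect hypothesis with the possible shapes of $\mathcal{C}$, is what will do the real work of producing the shorter $\Omega$-perfect module in $\mathcal{C}$ required for the contradiction.
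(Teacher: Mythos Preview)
Your proposal contains a genuine misunderstanding and is missing the two key ideas that actually close the argument.

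First, the claim that ``the irreducible maps $f_i \colon E_i \to M$ are simultaneously all monomorphisms or all epimorphisms'' is not what $\Omega$-perfect means. The definition says each individual irreducible map $f_i$ has the property that $\Omega^n f_i$ is mono for all $n$ \emph{or} epi for all $n$; different $f_i$'s may have different types (cf.\ the paper's Lemma on $\alpha \le 2$, where $f_1$ is mono and $f_2$ is epi). So your case split into ``all mono'' versus ``all epi'' is not justified by the hypothesis, and the elaborate length iteration you run in the mono case is built on sand.

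Second, and more importantly, you are trying to extract a contradiction \emph{only} by producing a shorter $\Omega$-perfect module, and you admit this fails in the epimorphism case. The paper's proof does not try to do that. Instead:
\begin{itemize}
\item If \emph{some} $g_i \colon E_i \to M$ is a monomorphism, then by \cite[Lemma~2.6]{GZ2} the source $E_i$ is itself $\Omega$-perfect, and $\ell(E_i) < \ell(M)$ gives the minimality contradiction immediately. No iteration along the $\tau$-orbit is needed, and this handles all $r$ at once.
\item Hence every $g_i$ is an epimorphism. Since $r \ge 2$, \cite[Lemma~2.5]{GZ2} forces every $f_i \colon \Omega^2 M \to E_i$ to be an epimorphism as well. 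Because $M$ is $\Omega$-perfect, $\Omega^{2n} f_1$ and $\Omega^{2n} g_1$ remain epimorphisms for all $n$, and their composites give epimorphisms $\Omega^{2n} M \twoheadrightarrow M$. By \cite[Lemma~X.3.1]{ARS} such maps are nonzero in $\StHom$, so $\Ext^{2n}(M,M) \cong \StHom(\Omega^{2n} M, M) \ne 0$ for all $n \ge 1$.
\end{itemize}
The last line contradicts the hypothesis that $\mathcal{C}$ contains a module of finite extension degree (via the corollary that all modules in $\mathcal{C}$ then have finite extension degree). You never use this hypothesis in your argument, which is exactly why the epi case cannot close: in that case the lengths along the $\tau$-orbit need not shrink, and the contradiction is cohomological, not combinatorial.
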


\begin{proof} Consider the almost split sequence ending in $M$: \begin{align*}
\xymatrix{ 0 \ar[r] & \Om^2 M \ar[rr]^{[f_1,...,f_r]^T} & & \bigoplus_1^r E_i \ar[rr]^{[g_1,...,g_r]} & & M \ar[r] & 0 } \end{align*}
where each $E_i$ is indecomposable.  Suppose for purposes of contradiction that that $r \geq 2$.

If $g_i$ is a monomorphism for some $i$, then $E_i$ is $\Omega$-perfect by \cite[Lemma 2.6]{GZ2} and $\ell(E_i) < \ell(M)$, contrary to the choice of $M$.  Thus, each $g_i$ is an epimorphism. If $r\geq 2$, then $f_i$ is also an epimorphism for each $i$, by \cite[Lemma 2.5]{GZ2}.  Since $M$ is $\Omega$-perfect, for each $n\geq1$ the induced morphisms $\Omega^{2n} f_1: \Omega^{2(n+1)}M \to \Omega^{2n}E_1$ and $\Omega^{2n} g_1: \Omega^{2n}E_1 \to \Omega^{2n} M$ are epimorphisms.  Composing these maps gives us epimorphisms $\Omega^{2n}M \to M$ for each $n \in \mathbb{N}$.  Lemma X.3.1 of \cite{ARS} then says that for each $n \in \mathbb{N}$  we have $0 \neq \StHom(\Omega^{2n}M,M) \cong \Ext^{2n}(M,M)$.  But Corollary\autoref{cor:ext.deg_component} says that $M$ must have finite extension degree. We have arrived at a contradiction. \end{proof}

Since every eventually $\Omega$-perfect component contains an $\Omega$-perfect module of minimal length, it follows that any such component of finite extension degree admits a module whose almost split sequence has an indecomposable middle term.

The next step in showing that an $\Omega$-perfect component $\mathcal{C}$ of finite extension degree is of stable type $\mathbb{Z}A_{\infty}$ is to show that $\alpha(M) \leq 2$ for all $M$ in $\mathcal{C}$.  To this end, we proceed with the following observations concerning $\Omega$-perfect modules.

\begin{lem} \label{lem:alpha<=2} Let $M \in \mathcal{C}$ be an $\Omega$-perfect module.  Let \begin{align*} \xymatrix{ 0 \ar[r] & \Om^2 M \ar[rr]^{[f_1,...,f_r]^T} & & \bigoplus_1^r E_i \ar[rr]^{[g_1,...,g_r]} & & M \ar[r] & 0 } \end{align*}
 be an almost split sequence where each $E_i$ is indecomposable.  If $f_1$ is a monomorphism, then $g_1$ is an epimorphism  and $r \leq 2$.  If $r = 2$, then $f_2$ is an epimorphism and $g_2$ is a monomorphism.
 \end{lem}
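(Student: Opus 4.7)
The key ingredients are: the $\Omega$-perfectness dichotomy (each of $f_1,\dots,f_r,g_1,\dots,g_r$ is either a monomorphism or an epimorphism, with the type preserved by $\Omega^n$ for all $n\ge 0$); the length identity $\ell(\Omega^2 M)+\ell(M)=\sum_i\ell(E_i)$ together with the strict inequalities from irreducibility ($f_i$ a monomorphism gives $\ell(E_i)>\ell(\Omega^2 M)$, and $g_i$ an epimorphism gives $\ell(E_i)>\ell(M)$, since an irreducible map is never an isomorphism); and the finite extension degree of $M$, which is available via Corollary \ref{cor:ext.deg_component} because the standing hypothesis of the section is that $\mathcal{C}$ contains a module of finite extension degree.

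To show that $g_1$ is an epimorphism, I would argue by contradiction. Suppose $g_1$ is not an epimorphism; by the dichotomy it is a monomorphism, and $\Omega$-perfectness then gives that every $\Omega^n f_1$ and every $\Omega^n g_1$ is a monomorphism. Composing and iterating yields a chain of monomorphisms $\Omega^{2k}M\hookrightarrow M$ for every $k\ge 0$, so the lengths $\ell(\Omega^{2k} M)$ form a non-increasing sequence of positive integers and must stabilize. At stabilization, the relevant composite monomorphism between modules of equal length is an isomorphism, and so $\Omega^{2N} f_1$ becomes a split monomorphism, contradicting its irreducibility in the stable AR quiver (since $\Omega$ is a stable auto-equivalence).

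For $r\le 2$ and the structure in the case $r=2$, the length identity combined with the strict bounds does most of the work. Since $f_1$ is mono and $g_1$ is epi, we get $\ell(E_1)\ge\max\{\ell(\Omega^2 M),\ell(M)\}+1$. When $r=2$, the identity forces $\ell(E_2)\le\min\{\ell(\Omega^2 M),\ell(M)\}-1$, which precludes both $f_2$ being a monomorphism and $g_2$ being an epimorphism, and the dichotomy then delivers part (c). To exclude $r\ge 3$, I would first rule out any $f_i$ ($i\ge 2$) being a monomorphism: otherwise the argument of the preceding paragraph applied to the pair $(f_i,g_i)$ would force $g_i$ to be an epimorphism and give $\ell(E_i)\ge\max+1$, so that $\ell(E_1)+\ell(E_i)\ge 2\max+2$ would exceed the total $\ell(\Omega^2 M)+\ell(M)\le 2\max$. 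Hence each $f_i$ ($i\ge 2$) is epi. Since $f_1$ is mono, not all $f_i$ are epimorphisms, so by the contrapositive of \cite[Lemma 2.5]{GZ2} some $g_j$ ($j\ge 2$) is a monomorphism; combining $f_1$ mono with $g_j$ mono via the $\Omega$-perfect structure should then produce a chain of nonzero morphisms $\Omega^{2n}M\to M$ in the stable category for infinitely many $n$, giving $\Eext^{2n}(M,M)\ne 0$ in infinitely many degrees and contradicting finite extension degree.

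The most delicate step will be this last part of excluding $r\ge 3$: the length inequalities alone do not quite close the argument, and one must carefully combine the $\Omega$-perfectness dichotomy, \cite[Lemma 2.5]{GZ2}, and a repeated monomorphism argument across $\Omega$-shifts to arrive at the violation of the finite extension degree inherited from $\mathcal{C}$.
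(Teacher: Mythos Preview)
Your arguments for the first and third claims are correct and self-contained, which is nice: the chain of proper monomorphisms $\Omega^{2(k+1)}M\hookrightarrow\Omega^{2k}E_1\hookrightarrow\Omega^{2k}M$ does give a strictly decreasing sequence of lengths and hence a contradiction, and your length inequality $\ell(E_2)\le\min\{\ell(\Omega^2M),\ell(M)\}-1$ cleanly forces $f_2$ epi and $g_2$ mono when $r=2$. The paper instead cites \cite[Lemma 2.5]{GZ2} for the first claim and uses the pushout square for the third, but your route is a legitimate elementary alternative.

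However, there are two problems with the middle claim $r\le2$. First, finite extension degree is \emph{not} a hypothesis of this lemma; it is stated only for an $\Omega$-perfect module $M$, and the paper's proof (a direct citation of \cite[Lemma 3.4]{GZ2}) makes no use of it. So you are proving a weaker statement than the one asserted. Second, and more seriously, even granting yourself that hypothesis your sketch for excluding $r\ge3$ does not close. After you show each $f_i$ ($i\ge2$) is epi and obtain some $g_j$ ($j\ge2$) mono, you propose to ``combine $f_1$ mono with $g_j$ mono'' to produce stably nonzero maps $\Omega^{2n}M\to M$. But $f_1$ lands in $E_1$ while $g_j$ starts at $E_j\ne E_1$, so there is no composite to iterate; the available composites $g_1f_1$ or $g_jf_j$ mix a mono with an epi and need not be monomorphisms, epimorphisms, or stably nonzero. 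Your length bounds alone only yield $r\le\min\{\ell(\Omega^2M),\ell(M)\}$, which does not exclude $r\ge3$. The paper avoids this entirely by invoking \cite[Lemma 3.4]{GZ2}, and you should either do the same or supply a genuinely new argument for this step.
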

 
 \begin{proof} If $f_1$ is a monomorphism, then $g_1$ is an epimorphism by \cite[Lemma 2.5]{GZ2}, and then $r \leq 2$ by \cite[Lemma 3.4]{GZ2}.
 
 If $r =2$, then the following square is a pushout diagram: \begin{align*} \xymatrix@R=5pt{ & E_1 \ar@{->>}[dr]^{g_1}& \\ \Om^2X \ar@{^{(}->}[ur]^{f_1} \ar[dr]_{f_2} & & X \\ & E_2 \ar[ur]_{g_2} & } \end{align*}  It follows that parallel arrows are monomorphisms/epimorphisms simultaneously.  This gives that $f_2$ is an epimorphism and $g_2$ is a monomorphism. \end{proof}

\begin{cor} \label{cor:irred.mono} Suppose that $N$ is an indecomposable $\Om$-perfect module and that $f: M \to N$ is an irreducible monomorphism.  Then $M$ is $\Om$-perfect, $\alpha(M) \leq 2$ and if $\alpha(M) = 2$, the almost split sequence ending in $M$ has the form: \begin{align*} \xymatrix@R=5pt{& & \tau N \ar@{->>}[dr]^{g}& & \\ 0 \ar[r] &\tau M \ar@{^{(}->}[ur]^{\tau f} \ar@{->>}[dr]_{\alpha} & & M \ar[r]&  0 \\& & E \ar@{^{(}->}[ur]_{\beta} & & } \end{align*} \end{cor}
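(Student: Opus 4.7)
The plan is to first establish that $M$ is $\Omega$-perfect, and then to extract the valence bound and the structural form of the almost split sequence from Lemma \ref{lem:alpha<=2}.

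For the first step, the irreducible monomorphism $f \colon M \to N$ appears as one of the component maps $g_i \colon E_i \to N$ in the almost split sequence ending at $N$, with $E_i = M$. Since $N$ is $\Omega$-perfect and this $g_i$ is a monomorphism, the same result \cite[Lemma 2.6]{GZ2} invoked in the proof of Proposition \ref{prop:boundary.exists} immediately yields that $M$ is $\Omega$-perfect. Moreover, because $N$ is $\Omega$-perfect, the irreducible map $f$ is itself an $\Omega$-perfect map, and so every iterate $\Omega^n f$ is a monomorphism; in particular $\tau f = \Omega^2 f \colon \tau M \to \tau N$ is a monomorphism.

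Next, the mesh structure of the AR quiver turns the arrow $M \to N$ corresponding to $f$ into an arrow $\tau N \to M$, so $\tau N$ appears as an indecomposable summand of the middle term of the almost split sequence ending at $M$. That sequence therefore has the shape
\begin{align*}
0 \longrightarrow \tau M \xrightarrow{\,[\tau f,\, \ldots]^T\,} \tau N \oplus F \longrightarrow M \longrightarrow 0,
\end{align*}
in which the first component of the source map is $\tau f$, known to be a monomorphism by the preceding step.

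Finally, I will apply Lemma \ref{lem:alpha<=2} to the now-$\Omega$-perfect module $M$ with $\tau f$ playing the role of the mono-component. This forces the middle term to contain at most two indecomposable summands, i.e.\ $\alpha(M) \leq 2$. When $\alpha(M) = 2$, the remaining summand $F$ is a single indecomposable $E$, and the lemma supplies the form of the remaining maps: $\tau N \to M$ is an epimorphism $g$, $\tau M \to E$ is an epimorphism $\alpha$, and $E \to M$ is a monomorphism $\beta$ — exactly the diagram of the corollary. The main obstacle is the opening move, namely transferring $\Omega$-perfection from $N$ to $M$ across the irreducible monomorphism $f$; once that is in hand by citing \cite[Lemma 2.6]{GZ2}, the rest of the proof reduces to a single invocation of Lemma \ref{lem:alpha<=2}.
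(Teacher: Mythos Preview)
Your proof is correct and follows essentially the same approach as the paper: cite \cite[Lemma 2.6]{GZ2} to transfer $\Omega$-perfection from $N$ to $M$, observe that $\tau f$ is a monomorphism because $N$ is $\Omega$-perfect, and then apply Lemma~\ref{lem:alpha<=2} to the almost split sequence ending in $M$. The paper's version is simply more terse, but the logical structure is identical.
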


\begin{proof} That $M$ is $\Omega$-perfect follows from \cite[Lemma 2.6]{GZ2}. Since $N$ is $\Om$-perfect, we have $\tau f: \tau M \to \tau N$ is a monomorphism.  The remaining claims now follow immediately from the preceding lemma.\end{proof}

\begin{lem} \label{lem:alpha>=3} Suppose that $\mathcal{C}$ has finite extension degree.  Let $X \in \mathcal{C}$ be an $\Omega$-perfect module with almost split sequence: \begin{align*} \xymatrix{ 0 \ar[r] & \Om^2 X \ar[r]^{[f_i]^T} & \bigoplus_1^r E_i \ar[r]^{[g_i]} & X \ar[r] & 0 } \end{align*} where each $E_i$ is indecomposable.  If $\alpha(X) \geq 3$, then $f_i$ is an epimorphism  and $g_i$ is a monomorphism for each $i$.
\end{lem}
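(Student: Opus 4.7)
The plan is to handle the $f_i$'s and the $g_i$'s separately. Since $X$ is $\Omega$-perfect, each irreducible map $f_i:\Omega^2 X \to E_i$ and each $g_i: E_i \to X$ is either a monomorphism or an epimorphism, and $\Omega^n f_i,\ \Omega^n g_i$ remain consistently mono or epi for every $n \geq 0$. This observation drives everything.

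First I would dispose of the $f_i$'s. If some $f_j$ were a monomorphism, then Lemma\autoref{lem:alpha<=2} would immediately force $\alpha(X) \leq 2$, contradicting $r \geq 3$. Hence each $f_i$ must be an epimorphism.

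For the $g_i$'s I would argue by contradiction and suppose that some $g_k$ is an epimorphism. Applying $\tau^n = \Omega^{2n}$ to the given almost split sequence yields the almost split sequence ending at $\Omega^{2n} X$, in which the irreducible maps through $\Omega^{2n}E_k$ are $\Omega^{2n} f_k$ and $\Omega^{2n} g_k$. By $\Omega$-perfectness, both of these remain epimorphisms for every $n \geq 0$, so the composition $\Omega^{2n} g_k \circ \Omega^{2n} f_k : \Omega^{2n+2}X \to \Omega^{2n} X$ is epi. Iterating produces an epimorphism $\Omega^{2n} X \to X$ for every $n \in \mathbb{N}$, and, exactly as in the proof of Proposition\autoref{prop:boundary.exists}, Lemma X.3.1 of \cite{ARS} shows that this composition represents a nonzero class in $\StHom(\Omega^{2n}X, X) \cong \Eext^{2n}(X,X) \cong \Ext^{2n}(X,X)$. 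So $\Ext^{2n}(X,X) \neq 0$ for all $n \in \mathbb{N}$, which contradicts $\ed(\mathcal{C}) < \infty$ via Corollary\autoref{cor:ext.deg_component}. Therefore each $g_i$ must be a monomorphism.

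The main obstacle I anticipate is the nonvanishing in the stable category: that the iterated composition of irreducible epimorphisms does not become zero modulo projectives. This is precisely the role of Lemma X.3.1 of \cite{ARS}, and it is the same technical input that powered the proof of Proposition\autoref{prop:boundary.exists}, so no new ingredient is needed beyond what has already been set up.
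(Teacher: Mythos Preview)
Your proposal is correct and follows essentially the same argument as the paper's proof: use Lemma~\ref{lem:alpha<=2} to force each $f_i$ to be an epimorphism, then if some $g_k$ were epi, compose the $\Omega$-perfect maps to produce epimorphisms $\Omega^{2n}X \to X$ and invoke Lemma X.3.1 of \cite{ARS} to contradict finite extension degree. The paper's write-up is terser (it suppresses the explicit iterated composition $\Omega^{2n}g_k \circ \Omega^{2n}f_k$), but the substance is identical.
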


\begin{proof} The previous lemma gives that if $f_i$ is a monomorphism for some $i$, then $\alpha(X) \leq 2$. This shows that each $f_i$ is an epimorphism.

If $g_i$ is an epimorphism for some $i$, then since $X$ is $\Om$-perfect we obtain epimorphisms $\Om^{2n}X \to X$ for each $n \in \mathbb{N}$.  Now Lemma X.3.1 of \cite{ARS} gives $\StHom(\Om^{2n}X,X) \neq 0$, i.e. $\Eext^{2n}(X,X) \neq 0$ for all $n \in \mathbb{N}$.  This contradicts the assumption that $\mathcal{C}$ has finite extension degree.  Thus, each $g_i$ is a monomorphism, as claimed.
\end{proof}


\begin{prop} \label{prop:Omega.perfect1} Let $\mathcal{C}$ be an eventually $\Om$-perfect component containing a module of finite extension degree.  Then for each non-projective module $M$ in $\mathcal{C}$ the middle term of the almost split sequence ending in $M$ has at most two non-projective indecomposable summands.\end{prop}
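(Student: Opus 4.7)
The plan is to argue by contradiction: assume some non-projective $M \in \mathcal{C}$ has $\alpha(M) \geq 3$ non-projective indecomposable summands in the middle of its almost split sequence. Because $\mathcal{C}$ is eventually $\Omega$-perfect and both $\alpha$ and $\ed$ are invariant under $\tau$, I may replace $M$ by a suitable $\tau$-translate and assume $M$ is itself $\Omega$-perfect. Lemma\autoref{lem:alpha>=3} then applies to the ass $0 \to \tau M \to \bigoplus_{i=1}^r E_i \to M \to 0$, giving that every $f_i\colon \tau M \to E_i$ is epic and every $g_i\colon E_i \to M$ is monic, and Corollary\autoref{cor:irred.mono} shows each $E_i$ is $\Omega$-perfect with $\alpha(E_i) \leq 2$; when $\alpha(E_i) = 2$, the ass of $E_i$ takes the form $0 \to \tau E_i \to \tau M \oplus F_i \to E_i \to 0$ with $F_i \hookrightarrow E_i$ monic. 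Iterating this construction at $E_1$ produces a chain of irreducible monomorphisms $\cdots \hookrightarrow X^{(1)} \hookrightarrow X^{(0)} = E_1 \hookrightarrow M$ of $\Omega$-perfect modules of strictly decreasing length, which must terminate at some $X^{(k)}$ with $\alpha(X^{(k)}) = 1$.

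I would then track extension degrees along this chain. Setting $m = \ed(M)$, Lemma\autoref{lem:ed.of.middle.term} gives $\ed(\bigoplus E_i) = m + 2$, so some $\Eext^{m+2}(E_i, E_j)$ is nonzero. Applying Lemma\autoref{lem:cone} to the cones $\mathcal{C}_{\tau^{-1}E_i}$ and $\mathcal{C}_{\tau^{-1}E_j}$, each of which contains the opposite $E_\bullet$ at distance at most $2$ via a path through $M$, and then dimension shifting, yields the key bound $\Eext^l(E_i, E_j) = 0$ whenever $l > \min\{\ed(E_i), \ed(E_j)\} + 2$. Hence at least one $E_l$ must satisfy $\ed(E_l) \geq m$, and the argument splits into the high case $\ed(E_l) = m + 2$ and the plateau case $\ed(E_l) = m$.

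In the high case I propagate along the chain from $X^{(0)} = E_l$: applying Lemma\autoref{lem:cone} now to the cone $\mathcal{C}_{X^{(j-1)}}$, which contains $X^{(j+1)}$ at distance at most $2$, the same cross-extension bound forces both cross-extensions between $\tau X^{(j-1)}$ and $X^{(j+1)}$ at degree $m + 2(j+2)$ to vanish. Lemma\autoref{lem:ed.of.middle.term} then inductively forces $\ed(X^{(j)}) = m + 2(j+1)$ along the chain, and the terminal ass $0 \to \tau X^{(k)} \to \tau X^{(k-1)} \to X^{(k)} \to 0$ with $\alpha(X^{(k)}) = 1$ yields $\ed(X^{(k-1)}) = \ed(X^{(k)}) + 2$, which contradicts the propagated value. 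In the plateau case any deviation from $\ed = m$ triggers either a high-case propagation (if $\ed(X^{(j+1)}) = m + 2$) or an immediate contradiction via Proposition\autoref{prop:jump} (if $\ed(X^{(j+1)}) < m$, taking $N = X^{(j+1)}$ and $L = \tau X^{(j-1)}$ to force $\ed(\tau X^{(j-1)}) = m + 2$); and if the plateau persists all the way, the terminal relation forces $\ed(X^{(k)}) = m - 2$ and Proposition\autoref{prop:jump} applied to the ass of $X^{(k-1)}$ with $N = X^{(k)}$ and $L = \tau X^{(k-2)}$ (taking $L = \tau M$ when $k = 1$) gives $\ed(\tau X^{(k-2)}) = m + 2$, contradicting the plateau value $\ed(X^{(k-2)}) = m$. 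The main obstacle is handling the plateau case and positioning Proposition\autoref{prop:jump} correctly along the chain, together with verifying that the cross-extension bounds from Lemma\autoref{lem:cone} remain tight enough at every step to rule out all cross-extension contributions at the critical degrees.
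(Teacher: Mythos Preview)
Your argument is correct in outline and reaches the contradiction, but the route is genuinely different from the paper's. The paper does \emph{not} track extension degrees along the chain. Instead, after establishing (as you do) that each $g_i$ is monic, each $E_i$ is $\Omega$-perfect with $\alpha(E_i)\le 2$, and that every branch below $M$ is a finite chain of irreducible monomorphisms, the paper observes that the union of all $r$ such chains is a full slice of the stable component. This slice is finite (so $\mathcal{C}$ has finitely many $\tau$-orbits) and is a tree (two modules in the slice with a common immediate predecessor must coincide, since only one irreducible map out of $\tau E$ is a monomorphism). Theorem\autoref{thm:bands} then gives the contradiction directly. Your approach trades this structural step for a numerical one: you follow a single chain and force a contradiction between the propagated values of $\ed$ and the terminal relation $\ed(X^{(k-1)})=\ed(X^{(k)})+2$. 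This avoids verifying the tree property and avoids invoking Theorem\autoref{thm:bands}, at the cost of the high/plateau case analysis; the paper's route is shorter because it reuses machinery already in place.

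Two small points. First, your case split is not exhaustive as written: from $\ed(E_l)\ge m$ and $\ed(E_l)\le m+2$ you get $\ed(E_l)\in\{m,m+1,m+2\}$, and you never address $m+1$. The fix is painless: your ``high case'' propagation needs only $\ed(E_l)>\ed(\tau M)=m$, so state it as $\ed(E_l)>m$ and let the propagated value be $\ed(E_l)+2j$ rather than $m+2(j+1)$; the terminal step still yields $-2=2$. The same remark applies when the plateau breaks to a value $m+1$. Second, in the high case you can bypass the explicit cone bounds from Lemma\autoref{lem:cone} entirely: once $\ed(\tau X^{(j-1)})=\ed(X^{(j-1)})<\ed(X^{(j)})$, Proposition\autoref{prop:jump} gives $\ed(X^{(j+1)})=\ed(X^{(j)})+2$ in one stroke, which is both cleaner and exactly what you already use in the plateau case.
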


\begin{proof} Since $\mathcal{C}$ is eventually $\Omega$-perfect, we may apply $\tau$ repeatedly to any indecomposable module $M$ in $\mathcal{C}$ and assume that $M$ is $\Omega$-perfect and that there is no projective module in the cone $\mathcal{C}_M$.  We show that if $\alpha(M) \geq 3$ for such an $M$, then the stable part of $\mathcal{C}$ must have finitely many $\tau$-orbits and its slice is a tree. This contradicts Theorem\autoref{thm:bands}.

We now construct the slice of the stable component $\mathcal{C}$ that contains the module $M$. Begin with the almost split sequence ending in $M$: \begin{align*} \xymatrix{ 0 \ar[r] & \tau M \ar[r]^{[f_i]^T} & \bigoplus_1^r E_i \ar[r]^{[g_i]} & M \ar[r] & 0 } \end{align*} where the $E_i$ are indecomposable modules and assume that $r \geq 3$.

We know from Lemma\autoref{lem:alpha>=3} that each $g_i$ is a monomorphism. Corollary \autoref{cor:irred.mono} gives that $\alpha(E_i) \leq 2$ for each $i$. If $\alpha(E_i) =1$, then $E_i$ lies at an end of the slice.  If $\alpha(E_i) = 2$, Corollary\autoref{cor:irred.mono} implies that the almost split sequence ending in $E_i$ has the form: \begin{align} \label{eq:AlmostSplit}
\xymatrix@R=5pt{& & \tau M   \ar@{->>}[dr]^{g_i}& & \\ 0 \ar[r] &\tau E_i \ar@{^{(}->}[ur]^{\tau f_i} \ar@{->>}[dr]_{\alpha_i} & & E_i \ar[r]&  0 \\& & E_i' \ar@{^{(}->}[ur]_{\beta^1_i} & & } \end{align} where $E_i'$ is indecomposable and $\Om$-perfect. We therefore obtain exactly one new map $E_i' \longhookrightarrow E_i$ in our slice. 

If $\alpha(E_i')=1$ we have reached the end of the slice. Otherwise, we may repeat the above steps to obtain again exactly one new map $E_i'' \longhookrightarrow E_i'$ in the slice. Continuing this process yields, for each $i$, a chain of irreducible monomorphisms: \begin{align*}  \ldots \longhookrightarrow E_i'' \longhookrightarrow E_i' \longhookrightarrow E_i \longhookrightarrow M \end{align*} 
This chain of proper monomorphisms must stop eventually since the module $M$ has finite length. 

Thus, a slice containing the module $M$ consists only of modules lying along finite chains of monomorphisms that terminate in $M$. Since there are only $\alpha(M)$ such chains, the stable component $\mathcal{C}$ has only finitely many $\tau$-orbits. 

Now we show that this slice is a tree.  If the slice contained an unoriented cycle, then we could find two non-isomorphic modules $X_1$ and $X_2$ with a common immediate predecessor in the slice.  So we suppose that two modules in the slice, $X_1$ and $X_2$, do have a common immediate predecessor, $E$, in the slice.  We show that $X_1 \cong X_2$ and therefore the slice does contains no unoriented cycles.  Since every map in the slice is a monomorphism, we must have irreducible monomorphisms $E \hookrightarrow X_i$ for $i=1,2$.  Since $X_i$ is $\Omega$-perfect, we see that the irreducible maps $\Om^2 E \hookrightarrow \Omega^2 X_i$ are also monomorphisms.  However, since $E$ is in the slice, the almost split sequence ending in $E$ has the form (\ref{eq:AlmostSplit}).  Thus, there is an irreducible monomorphism from from $\Omega^2E$ to only one other module.  Therefore $X_1 \cong X_2$.   \end{proof}


\begin{thm}\label{thm:OmegaPerfect} Let $\Lambda$ be a symmetric artin algebra. Let $\mathcal C$ be an eventually $\Omega$-perfect AR component of finite extension degree. Then $\mathcal C$ is of stable type $\mathbb{Z}A_{\infty}$.
\end{thm}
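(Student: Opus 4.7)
The plan is to combine the two structural results of this section, Proposition~\autoref{prop:boundary.exists} and Proposition~\autoref{prop:Omega.perfect1}, with Riedtmann's structure theorem for stable translation quivers, and then use the exclusion results Theorem~\autoref{thm:tubes} and Theorem~\autoref{thm:bands} to eliminate all possibilities save $\mathbb{Z}A_\infty$.

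First, I would observe that Proposition~\autoref{prop:Omega.perfect1} implies that every vertex in the stable part of $\mathcal{C}$ has valence at most $2$: in the stable quiver, projective summands of middle terms are discarded, and the proposition bounds the number of non-projective indecomposable summands by $2$. By Corollary~\autoref{cor:ext.deg_component}, every module in $\mathcal{C}$ has finite extension degree, so Proposition~\autoref{prop:boundary.exists} applies to each $\tau$-power in $\mathcal{C}$ and produces at least one vertex whose almost split sequence has an indecomposable middle term, i.e. a vertex of valence exactly $1$ in the stable component.

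Next, I would invoke Riedtmann's structure theorem to write the stable part of $\mathcal{C}$ as $\mathbb{Z}T/\Pi$ for a directed tree $T$ and an admissible group $\Pi$ of automorphisms. The valence conditions established above force $T$ to be a connected tree in which every vertex has valence at most $2$ and at least one vertex has valence $1$; the only such trees are $A_n$ (for some $n<\infty$) and $A_\infty$. Because $\mathcal{C}$ has finite extension degree, Theorem~\autoref{thm:tubes} prohibits the existence of any $\tau$-periodic module in $\mathcal{C}$; since any non-trivial admissible group on $\mathbb{Z}A_n$ or $\mathbb{Z}A_\infty$ would create $\tau$-periodic vertices, this forces $\Pi = 1$. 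Hence the stable part of $\mathcal{C}$ is $\mathbb{Z}A_n$ or $\mathbb{Z}A_\infty$.

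Finally, I would rule out $T = A_n$ using Theorem~\autoref{thm:bands}: the component $\mathbb{Z}A_n$ has finitely many ($n$) $\tau$-orbits and its slice is the acyclic tree $A_n$, so Theorem~\autoref{thm:bands} would force every module in $\mathcal{C}$ with eventually vanishing self-extensions to be projective. This contradicts $\ed(\mathcal{C})<\infty$. Therefore $T = A_\infty$ and $\mathcal{C}$ is of stable type $\mathbb{Z}A_\infty$, as claimed. The main judgement call in this proof is the bookkeeping around the stable quiver versus the full AR component: one must verify that Proposition~\autoref{prop:Omega.perfect1}'s bound on non-projective summands is exactly the stable valence, and that the preceding exclusion theorems indeed apply to the stable component; once these identifications are made, the classification is immediate.
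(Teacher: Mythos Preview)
Your argument is correct and matches the paper's proof almost exactly: both use Proposition~\ref{prop:boundary.exists} to obtain a vertex of valence~$1$, Proposition~\ref{prop:Omega.perfect1} to bound all stable valences by~$2$, and Theorem~\ref{thm:bands} to eliminate the $\mathbb{Z}A_n$ case. The only difference is that you make the Riedtmann structure theorem explicit and invoke Theorem~\ref{thm:tubes} to force the admissible group $\Pi$ to be trivial (thereby ruling out tubes $\mathbb{Z}A_\infty/\langle\tau^r\rangle$), whereas the paper passes directly from ``slice is $A_n$ or $A_\infty$'' to ``stable type is $\mathbb{Z}A_n$ or $\mathbb{Z}A_\infty$''; your version is slightly more careful on this point.
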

\begin{proof} First, Proposition \ref{prop:boundary.exists} guarantees that $\mathcal{C}$ contains a module $M$ with $\alpha(M)=1$.  Then Proposition\autoref{prop:Omega.perfect1} states that $\alpha(N) \leq 2$ for all $N \in \mathcal{C}$ and thus a slice in the stable part of $\mathcal{C}$ must have the form $A_{\infty}$ or $A_n$ for some $n$. Lastly, Theorem\autoref{thm:bands} rules out the possibility that the stable part of $\mathcal{C}$ is $\mathbb{Z}A_n$, so it must be the case that the stable part of $\mathcal{C}$ is $\mathbb{Z}A_{\infty}$.
\end{proof}

\begin{rem} K. Erdmann and O. Kerner examine the dimensions of Ext-groups of modules in quasi-serial components of the stable module category of a self-injective algebra in \cite{EK}. They give a sufficient condition (Lemma 3.6) that guarantees the eventual vanishing of self-extensions in components of stable type $\mathbb{Z}A_{\infty}$. \end{rem}

Our next result complements Theorem \ref{thm:main} by providing a description of the extension degree of all modules in components of stable type $\mathbb{Z}A_{\infty}$.  Recall that the {\it quasi-length} of a module $M$, denoted $\ql(M)$ in $\mathcal{C}$ is its distance to the boundary.  A module is {\it quasi-simple} if its quasi-length is zero.

\begin{thm}\label{thm:ZAinfty}Assume that $\mathcal{C}$ is a component of stable type $\mathbb{Z}A_{\infty}$ containing a nonprojective module of finite extension degree.  Let $M$ be a quasi-simple module in $\mathcal{C}$ and set $m=\ed(M)$. If $X\in \mathcal{C}$ and $l = \ql(X)$, then  $\ed(X) = m+ 2l$ and $\Eext^{m+2l}(X,X) \cong \Eext^m(M,M)$.\end{thm}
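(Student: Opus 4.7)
The plan is to induct on $l = \ql(X)$, using Lemma~\ref{lem:ed.of.middle.term} to lift from the boundary and Proposition~\ref{prop:jump} to lift from any positive quasi-length to the next. A key preliminary observation is that all modules of a fixed quasi-length in a $\mathbb{Z}A_\infty$ component lie in a single $\tau$-orbit, and since $\Lambda$ is symmetric $\tau = \Omega^2$, so the dimension-shift formula (2.1) yields $\Eext^i(\tau^k N, \tau^k N) \cong \Eext^i(N,N)$ for every $N \in \mathcal{C}$ and all $i, k \in \mathbb{Z}$. It therefore suffices to verify the formula for one representative of each $\tau$-orbit. The base case $l = 0$ is immediate: any quasi-simple module in $\mathcal{C}$ equals $\tau^k M$ for some $k$, so the observation above gives $\ed(X) = m$ and $\Eext^m(X,X) \cong \Eext^m(M,M)$.

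For the inductive step, assume the formula holds for every module in $\mathcal{C}$ of quasi-length at most $l$, and let $X$ satisfy $\ql(X) = l+1$. The combinatorics of $\mathbb{Z}A_\infty$ force $X$ to appear as the unique ``upper'' indecomposable non-projective summand of the middle term of the almost split sequence ending at a suitable module $Y \in \mathcal{C}$ with $\ql(Y) = l$:
$$0 \to \tau Y \to E_1 \oplus X \oplus P \to Y \to 0,$$
where, when $l \geq 1$, the summand $E_1$ satisfies $\ql(E_1) = l-1$, and $P$ is a (possibly zero) projective summand. Since $\Eext^*$ in positive degree vanishes on projective arguments, $P$ may be absorbed into $E_1$ without affecting extension degrees or top-degree stable self-extensions.

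When $l = 0$ the summand $E_1$ is absent and Lemma~\ref{lem:ed.of.middle.term} applied with $M = Y$ gives $\ed(X) = m + 2$ and $\Eext^{m+2}(X,X) \cong \Eext^m(Y,Y) \cong \Eext^m(M,M)$. When $l \geq 1$, by the induction hypothesis $\ed(Y) = m + 2l$ and $\ed(E_1 \oplus P) = \ed(E_1) = m + 2(l-1) < m + 2l$, so Proposition~\ref{prop:jump} applies with $M = Y$, $N = E_1 \oplus P$, $L = X$ and yields $\ed(X) = m + 2(l+1)$ together with $\Eext^{m+2(l+1)}(X,X) \cong \Eext^{m+2l}(Y,Y) \cong \Eext^m(M,M)$, completing the induction.

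The main obstacle is structural rather than cohomological: one has to verify that the shape of $\mathbb{Z}A_\infty$ realizes every $X$ of quasi-length $l+1$ as the upper summand of the middle term of an AR sequence ending at some module of quasi-length $l$, and that the strict inequality $\ed(N) < \ed(M)$ required by Proposition~\ref{prop:jump} is met at every stage. Once this structural fact is in hand, the cohomological content is essentially forced by the two propagation lemmas already at our disposal.
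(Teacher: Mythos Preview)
Your proof is correct and follows essentially the same route as the paper's: induction on quasi-length, with Lemma~\ref{lem:ed.of.middle.term} handling the step from quasi-length $0$ to $1$ and Proposition~\ref{prop:jump} handling all subsequent steps. Two minor tactical differences are worth noting. First, the paper deals with a possible projective summand $P$ in the middle term by replacing $X$ with a suitable $\tau$-translate so that the almost split sequence in question has no projective summand; your alternative of absorbing $P$ into $N$ is also valid, since $\Eext^i(-,P)$ and $\Eext^i(P,-)$ vanish in positive degree and hence neither the hypothesis $\ed(N)<\ed(Y)$ nor the conclusion of Proposition~\ref{prop:jump} is affected---but you should state this explicitly rather than leave it as an assertion. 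Second, the paper first uses Proposition~\ref{prop:ed.of.phi} to bound $\ed(X)\le m+2l$ and then only needs the isomorphism from the induction; you skip this and extract both the extension degree and the isomorphism directly from Proposition~\ref{prop:jump}, which is slightly more economical.
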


\begin{proof} Take a path of length $l=\ql(X)$ from $X$ to a quasi-simple module and note that all quasi-simple modules in this component are of the form $\tau^iM \cong \Om^{2i}M$ for some $i \in \mathbb{Z}$.  Since none of these quasi-simples have self-extensions, we may assume that $i=0$.  We can now say $X \in \mathcal{C}_M^l $ and so $\ed(X) \leq \ed(\mathcal{C}_M^l) = m+2l$ by Proposition\autoref{prop:ed.of.phi}.  

To finish the proof, it remains to show that $\Eext^{m+2l}(X,X) \cong \Eext^{m}(M,M)$.  We induce on $l=\ql(X)$. The case $l=0$ is clear, the case $l=1$ follows from Lemma\autoref{lem:ed.of.middle.term}.

Assume now that the result holds for all modules of quasi-length $l=k \geq 1$ and that the quasi-length of $X$ is $k+1$.  There is an almost split sequence of the form: \begin{align*} 0 \rightarrow \Om^2 Y \rightarrow Z \oplus X \rightarrow Y \rightarrow 0 \end{align*} where the quasi-lengths of $Y, \Om^2 Y$ and $Z$ are $k, k$ and $k-1$ respectively.  Since for all $i \in \mathbb{Z}$, $ \ed(\Om^{2i}X))= \ed(X)$ we may replace $X$ by $\Om^{2i}(X)$ if necessary and assume that the middle term of the AR sequence ending in $Y$ does not contain a projective summand.

The induction hypothesis says $\ed(Y) = m+2k > m+2(k-1) = \ed(Z)$ and $\Eext^{m+2k}(Y,Y) \cong \Eext^m(M,M)$.  An application of Proposition\autoref{prop:jump} now gives  $\Eext^{m+2(k+1)}(X,X) \cong \Eext^{m+2k}(Y,Y)$, which completes the proof.  \end{proof}

 \begin{rem} There do exist symmetric algebras which have non-projective modules with eventually vanishing self-extensions in $\mathbb{Z}A_{\infty}$ components. Using an argument analogous to that in Section 2 of \cite{Schulz}, one can show that the $R$-module $M$ introduced on page 1004 of \cite{LiuSchulz} has $\ed(M) = 1$.  In Example 5.7 of \cite{Liu} it is shown that $M$ is in a $\mathbb{Z}A_{\infty}$ component in the AR quiver of $R$.   According to Theorem \ref{thm:ZAinfty}, we have that for each $n \in \mathbb{N}$, $R$ admits an indecomposable module $X_n$ with $\ed(X_n) = 2n-1$.   In fact, whenever a symmetric algebra does not satisfy the Generalized Auslander-Reiten Condition it must admit indecomposable modules having arbitrarily large extension degrees as seen in Corollary \ref{cor:fails}. \end{rem}


\section{Components that are not eventually $\Omega$-perfect}
\label{sec:not_Omega_perfect}

In order to complete the proof of Theorem \autoref{thm:main}, it remains to show that a component of $\Gamma_{\Lambda}$ that is not eventually $\Omega$-perfect may not contain any non-projective modules of finite extension degree.  To show this, we need the following result of O. Kerner and D. Zacharia:

\begin{lem}\label{lem:not.perfect} Let $\Lambda$ be a symmetric algebra and $\mathcal{C}$ a component of $\Gamma_{\Lambda}$. If $\mathcal{C}$ is not eventually $\Omega$-perfect, then there exists an irreducible epimorphism $f:M \to N$ with $M$ and $N$ in $\mathcal{C}$ such that $\ker f$ is an $\Omega$-periodic simple $\Lambda$-module.   \end{lem}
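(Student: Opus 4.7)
The approach is to translate the failure of eventual $\Om$-perfectness into a concrete switch of type (monomorphism vs.\ epimorphism) in the syzygy sequence of some irreducible map, and then to extract a simple $\Om$-periodic kernel via a horseshoe-lemma analysis, using that $\Lambda$ has only finitely many simple modules.

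First I would unpack the hypothesis. Not eventually $\Om$-perfect means there is an indecomposable non-projective $X \in \mathcal{C}$ with $\tau^i X$ failing to be $\Om$-perfect for every $i \geq 0$, so at every level there is an irreducible map ending at $\tau^i X$ whose iterated syzygies $\Om^n$ do not eventually become all monomorphisms or all epimorphisms. Every irreducible map between indecomposable non-projective modules is itself either a monomorphism or an epimorphism, so traversing such a syzygy sequence produces, in some $\Om^k$-translate of $\mathcal{C}$, an irreducible map $g \colon A \to B$ that is an epimorphism while $\Om g$ is a monomorphism. (The dual case, where $g$ is a monomorphism and $\Om g$ an epimorphism, reduces to this by a further syzygy shift together with the symmetry of $\Lambda$.)

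Next I would analyse the short exact sequence $0 \to K \to A \xrightarrow{g} B \to 0$ via the horseshoe lemma applied to minimal projective covers $P(K), P(A), P(B)$. This produces a sequence $0 \to \Om K \to \Om A \oplus Q \to \Om B \to 0$ in which $Q$ is a projective summand measuring the difference between the minimal and horseshoe resolutions of $A$, and the minimal syzygy map $\Om g$ is the restriction to $\Om A$ of the horseshoe surjection $\Om A \oplus Q \twoheadrightarrow \Om B$. Thus $\Om g$ failing to be surjective forces $Q$ to be nonzero and to contribute essentially to $\Om B$, which in turn forces the top of $P(K)$ to share a simple summand with the top of $P(B)$. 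Using the irreducibility of $g$ together with the indecomposability of $A$ to rule out any non-trivial splitting of $K$ compatible with the almost split structure around $A$, one then concludes that $K$ must itself be simple.

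For $\Om$-periodicity of $K$ and relocation back into $\mathcal{C}$, the same analysis can be applied at every syzygy level where a type-switch occurs, producing a simple kernel each time. Finiteness of the set of simples forces these simples to recur along the syzygy orbit, so $\Om^m K \cong K$ for some $m > 0$, i.e., $K$ is $\Om$-periodic. Since $\tau = \Om^2$ preserves components, an appropriate even power of $\Om$ transports $g$ back inside $\mathcal{C}$ itself, yielding the required irreducible epimorphism $f \colon M \to N$ in $\mathcal{C}$ with simple $\Om$-periodic kernel. The main obstacle I anticipate is the middle step, where the horseshoe analysis must yield \emph{simplicity} of $K$ rather than merely indecomposability; this will require careful use of the minimality of the projective covers $P(A), P(B), P(K)$ together with the strong constraint that irreducibility of $g$ places on $K$ via the almost split sequence around $A$ in $\mathcal{C}$.
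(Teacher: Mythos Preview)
The paper does not give an independent proof; it cites the proof of Proposition~2.4 in \cite{GZ2}. Your outline is the shape of that argument: locate a type switch along the $\Om$-orbit of an irreducible map, argue that the kernel at an epi--to--mono switch is simple, and deduce $\Om$-periodicity from the finiteness of the set of simples.

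The gap you flag is real, and your sketch does not close it. The sentence ``$\Om g$ failing to be surjective forces $Q$ to be nonzero'' conflates two different maps $\Om A \to \Om B$: the map obtained by lifting the epimorphism $g$ to minimal projective covers and restricting to kernels is \emph{always} surjective (the lift $P(A)\to P(B)$ is split surjective whenever $g$ is), while the irreducible map $\Om g$ is only its stable class, and a surjective representative can perfectly well be stably equivalent to a monomorphism. So nothing in your horseshoe display forces $Q\neq 0$, and the step from ``tops share a simple summand'' to ``$K$ is simple'' is not an argument. In \cite{GZ2} simplicity is obtained by a finer comparison of minimal projective presentations, using both the sequence $0\to K\to A\to B\to 0$ and the sequence $0\to \Om A\to \Om B\to \coker(\Om g)\to 0$ together with the stable identification $\coker(\Om g)\cong K$; it is this interplay, not a horseshoe bookkeeping of $Q$, that pins $K$ down. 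There is also a secondary issue in your transport step: applying an even power of $\Om$ keeps you in $\mathcal{C}$, but you have not argued that an epi--to--mono switch occurs at an \emph{even} level. If all such switches sit at odd levels the resulting epimorphism lands in $\Om\mathcal{C}$ rather than $\mathcal{C}$, and ``a further syzygy shift together with the symmetry of $\Lambda$'' does not fix this without more work.
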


\begin{proof} This is shown in the proof of Proposition 2.4 of \cite{GZ2}. \end{proof}

\begin{thm} \label{thm:not.perfect} Suppose $\Lambda$ is a symmetric artin algebra and that $\mathcal{C}$ is a component in $\Gamma_{\Lambda}$.  If $\mathcal{C}$ is not eventually $\Omega$-perfect, then every non-projective module in $\mathcal{C}$ has infinite extension degree.  \end{thm}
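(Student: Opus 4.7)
The plan is to argue by contradiction, assuming that $\mathcal{C}$ contains a non-projective module of finite extension degree and then using the previously established machinery to extract a contradiction from the non-$\Omega$-perfectness of $\mathcal{C}$. By Corollary \ref{cor:ext.deg_component}, this assumption forces every non-projective module in $\mathcal{C}$ to have finite extension degree, and in particular every module appearing in $\mathcal{C}$ is perpendicular to every other module in $\mathcal{C}$ (Remark \ref{rmk:perp}(ii)).

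Next, I would invoke Lemma \ref{lem:not.perfect} to obtain an irreducible epimorphism $f: M \to N$ between modules of $\mathcal{C}$ whose kernel $S = \ker f$ is an $\Omega$-periodic simple module. Since $M$ is indecomposable and $\Lambda$ is symmetric, $S$ cannot be projective: a projective simple module over a symmetric algebra is also injective, which would force the sequence $0 \to S \to M \to N \to 0$ to split and contradict the indecomposability of $M$. So $S$ is a non-projective $\Omega$-periodic simple module.

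Now I would use Proposition \ref{prop:perp}: applied to the irreducible epimorphism $f: M \to N$ with $N$ indecomposable of finite extension degree, it yields that $\ker f = S$ has finite extension degree as well. On the other hand, $S$ being non-projective and $\Omega$-periodic (say $\Omega^n S \cong S$) produces via the dimension shift isomorphism (\ref{eq:dimshift}) the identifications
\begin{equation*}
\Eext^{kn}(S,S) \cong \StHom(\Omega^{kn}S,S) \cong \StHom(S,S) \neq 0
\end{equation*}
for every $k \geq 1$, just as in the proof of Theorem \ref{thm:tubes}. Hence $\Ext^i(S,S) \neq 0$ for infinitely many $i$, i.e.\ $\ed(S) = \infty$, contradicting the conclusion of the previous step.

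The main obstacle — really the only delicate point — is confirming that the simple $\Omega$-periodic kernel produced by Lemma \ref{lem:not.perfect} is non-projective, since otherwise the periodicity argument in Theorem \ref{thm:tubes} cannot be invoked; here the symmetry hypothesis on $\Lambda$ (projective equals injective) together with the indecomposability of $M$ closes that gap. All other steps are direct applications of earlier results.
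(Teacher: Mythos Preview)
Your proof is correct and follows essentially the same route as the paper's: contradiction via Corollary~\ref{cor:ext.deg_component}, then Lemma~\ref{lem:not.perfect} to produce the irreducible epimorphism with $\Omega$-periodic simple kernel $S$, then Proposition~\ref{prop:perp} to force $\ed(S)<\infty$, and finally the clash with $\Omega$-periodicity. The only cosmetic difference is that the paper cites Theorem~\ref{thm:tubes} to conclude $S$ is projective and then observes this is impossible for the kernel of an irreducible epimorphism (an irreducible map is never split), whereas you first rule out projectivity of $S$ via the splitting argument and then reproduce the periodicity computation from Theorem~\ref{thm:tubes} directly; the two orderings are logically interchangeable.
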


\begin{proof} Assume that $\mathcal C$ is not eventually $\Omega$-perfect. Suppose to the contrary that $\mathcal{C}$ contains a module of finite extension degree.  Then by Corollary \ref{cor:ext.deg_component} all modules in $\mathcal{C}$ have finite extension degree.

Now Lemma \ref{lem:not.perfect} guarantees the existence of an irreducible epimorphism $f: M \to N$ between modules in $\mathcal{C}$ where $\ker f$ is an $\Omega$-periodic simple module.  Next, Proposition \ref{prop:perp} gives that $\ed(\ker f)$ is finite.  Since $\ker f$ is $\Omega$-periodic, Theorem \ref{thm:tubes} gives that $\ker f$ is projective, or equivalently, it is injective.  This contradicts the fact that $\ker f$ is the kernel of an irreducible epimorphism. \end{proof}


\section{Applications}\label{sec:applications}

In this section we prove that the Generalized Auslander-Reiten Condition and the Extension Conjecture hold for large classes of symmetric artin algebras-- those without a $\mathbb{Z}A_{\infty}$ component in the stable AR quiver and those that are of wild tilted type in the sense of \cite{EKS}. Note that in the setting of symmetric algebras, the condition (GARC) requires the modules of finite extension degree to coincide with the projective modules. Also observe that the Extension Conjecture follows if all simple modules have infinite extension degree. 

\begin{cor}  Let $\Lambda$ be a symmetric artin algebra. Then $\Lambda$ satisfies the Generalized Auslander-Reiten Condition in the following cases:
\begin{enumerate}
\item $\Lambda$ has no AR component of stable type $\mathbb{ZA}_{\infty}$.
\item  $\Lambda$ is of wild tilted type.
\item $\Lambda$ is the trivial extension of an iterated tilted algebra.
\end{enumerate}
\end{cor}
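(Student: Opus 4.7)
The proof strategy exploits the fact that $\Lambda$ being symmetric (hence self-injective) makes $(GARC)$ equivalent to the statement that any non-projective module has non-vanishing self-extensions in arbitrarily high degrees. Indeed, the condition $\Ext^i(M,\Lambda)=0$ for $i>0$ is automatic in this setting, and any module of finite projective dimension over a self-injective algebra is already projective. So in each of the three cases it suffices to rule out a non-projective $\Lambda$-module $M$ with $\Ext^i(M,M)=0$ for $i\gg 0$. Case (1) is then immediate from Theorem A: the hypothesis forbids AR components whose stable part is $\mathbb{Z}A_\infty$, while Theorem A places any hypothetical non-projective module with eventually vanishing self-extensions in such a component.

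For case (2), I would combine Theorem A with Theorem \ref{thm:ZAinfty} and the structural description of the AR quiver of trivial extensions of wild tilted algebras from \cite{EKS}. Suppose for contradiction that a non-projective $M$ has eventually vanishing self-extensions. Theorem A puts $M$ in a $\mathbb{Z}A_\infty$ component $\mathcal{C}$, and Theorem \ref{thm:ZAinfty} then says that the quasi-simple modules in $\mathcal{C}$ also have finite extension degree $m$, with $\ed(X)=m+2\ql(X)$ for every $X\in\mathcal{C}$. I would then cite the results of \cite{EKS} that describe which indecomposables of $\Lambda$ can sit as quasi-simples in such a $\mathbb{Z}A_\infty$ component and show that any such quasi-simple admits non-trivial self-extensions in infinitely many degrees, yielding a contradiction. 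Case (3) would then be reduced to case (2) by Happel's theorem that trivial extensions of derived equivalent algebras are stably equivalent: an iterated tilted algebra is derived equivalent to a hereditary algebra of the same representation type, so its trivial extension is stably equivalent to the trivial extension of the corresponding tilted algebra. The Dynkin strata produce only tubes and are handled by Theorem \ref{thm:tubes}; the Euclidean strata combine Theorem \ref{thm:tubes} for tubes with Theorem \ref{thm:bands} for the remaining $\mathbb{Z}A_\infty$ components whose slices are cycle-free; the wild strata reduce to case (2).

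The principal obstacle is extracting from \cite{EKS} the precise input for case (2)---namely, that every quasi-simple module in a $\mathbb{Z}A_\infty$ component of a trivial extension of a wild tilted algebra has non-vanishing self-extensions in infinitely many degrees---and verifying that the stable-equivalence reduction in case (3) transports eventually vanishing self-extensions correctly between the two algebras. Once these two technical points are in place, Theorem A together with Theorems \ref{thm:tubes}, \ref{thm:bands}, and \ref{thm:ZAinfty} packages the remainder of the argument uniformly.
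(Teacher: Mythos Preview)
Your treatment of case~(1) matches the paper exactly. For case~(2) your argument is sound but slightly indirect: you pass to the quasi-simples via Theorem~\ref{thm:ZAinfty} and then seek a contradiction from \cite{EKS}. The paper instead cites Theorem~9.6 of \cite{EKS} directly, which already asserts that \emph{every} non-projective module in a $\mathbb{Z}A_\infty$ component of such an algebra has infinite extension degree; combined with Theorem~A this finishes case~(2) in one line, without invoking Theorem~\ref{thm:ZAinfty} at all.

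For case~(3) your route through Happel's stable equivalence is defensible in principle, but it is considerably heavier than what is needed and your Euclidean discussion contains a slip: the non-periodic components of the stable AR quiver of a trivial extension of a tame hereditary algebra are of the form $\mathbb{Z}\widetilde{\Delta}$ for a Euclidean diagram $\widetilde{\Delta}$, not $\mathbb{Z}A_\infty$, so calling them ``the remaining $\mathbb{Z}A_\infty$ components'' is incorrect (though, as you say, Theorem~\ref{thm:bands} does dispose of them). The paper avoids all of this: it simply observes that in the wild case the trivial extension is of wild tilted type, so case~(2) applies, while in the non-wild case the results of \cite{T} and \cite{TW} show that the stable AR quiver contains no $\mathbb{Z}A_\infty$ component at all, so case~(1) applies. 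No stable-equivalence transport argument, and hence none of the verification you flag as an obstacle, is required.
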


\begin{proof} (1) This is a direct consequence of Theorem \autoref{thm:main}. In this case, all nonprojective modules must have infinite extension degree.

(2) It follows from Theorem 9.6 of \cite{EKS} that all non-projective modules in AR components of stable type $\mathbb{Z}A_{\infty}$ have infinite extension degree. All other non-projective modules have infinite extension degree by Theorem \autoref{thm:main}. Thus, the modules of finite extension degree are precisely the projective modules.

(3) The wild case follows from (2).  If $\Lambda$ is not wild, then (1) applies because $\Gamma_{\Lambda}$  does not contain a component of stable type $\mathbb{Z}A_{\infty}$ by \cite{T}, \cite{TW}.
\end{proof}

We note that several classes of symmetric algebras satisfying (1) have been identified. In particular, these include all symmetric algebras of Euclidean type \cite{LS}.

As a further consequence we obtain the validity of the Extension Conjecture in the above cases as the conjecture holds for all symmetric algebras satisfying (GARC).  Theorem \autoref{thm:main} gives one additional case when the Extension Conjecture holds, even if (GARC) may fail.  Consequently we also obtain a proof of the Strong No Loop Conjecture for algebras satisfying the hypotheses of the next corollary.  Note that we do not require $\Lambda$ to be an algebra over a field as is assumed in the proof of the Strong No Loop Conjecture in \cite{ILP}. 

\begin{cor}  Let $\Lambda$ be a symmetric artin algebra.  If no simple $\Lambda$-module lies in a component of stable type $\mathbb{ZA}_{\infty}$, then $\Lambda$ satisfies the Extension Conjecture.  \end{cor}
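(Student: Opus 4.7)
The plan is to establish the contrapositive: if $S$ is a simple $\Lambda$-module with eventually vanishing self-extensions, then $\Ext^1(S,S)=0$. Under the hypothesis of the corollary, I will show that any such $S$ must in fact be projective, from which $\Ext^i(S,S)=0$ for all $i\geq 1$ follows immediately.

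So let $S$ be a simple $\Lambda$-module with $\Eext^i(S,S)=0$ for all $i\gg 0$, i.e.\ $\ed(S)<\infty$. The first step is to dispose of the non-projective case: suppose for contradiction that $S$ is non-projective. Then $S$ is a vertex of the stable AR quiver of $\Lambda$, lying in some component $\mathcal{C}$ of $\Gamma_{\Lambda}$. Since $S$ is a non-projective module in $\mathcal{C}$ with $\ed(S)<\infty$, Theorem~A applies directly and tells us that the stable part of $\mathcal{C}$ is of the form $\mathbb{Z}A_{\infty}$. But then $S$ is a simple module lying in a component of stable type $\mathbb{Z}A_{\infty}$, contradicting the hypothesis of the corollary.

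Consequently $S$ must be projective. Since projective modules over any ring have $\Ext^i(S,-)=0$ for $i\geq 1$, this gives $\Ext^1(S,S)=0$, completing the contrapositive. Turning this around yields exactly the Extension Conjecture for $\Lambda$: every simple $\Lambda$-module $S$ with $\Ext^1(S,S)\neq 0$ satisfies $\Ext^i(S,S)\neq 0$ for infinitely many $i$.

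There is essentially no obstacle here since all the real work has been done in establishing Theorem~A; the corollary is a short deduction. The only thing worth being careful about is the logical passage from "eventually vanishing self-extensions" to "the containing component has stable type $\mathbb{Z}A_{\infty}$", which requires $S$ to be non-projective so that $S$ represents an actual vertex of the stable AR quiver and Theorem~A is applicable.
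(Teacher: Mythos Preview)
Your proof is correct and is exactly the argument the paper intends: the corollary is stated without proof, as an immediate consequence of Theorem~A, and your deduction via the contrapositive---a simple $S$ with $\ed(S)<\infty$ that is non-projective would force its component to be stably $\mathbb{Z}A_{\infty}$, contradicting the hypothesis---is precisely what is meant.
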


In \cite{CH} it is shown that (GARC) is satisfied by all Noetherian rings which satisfy Auslander's Condition.  For a self-injective ring $\Lambda$, it is known that $\Lambda$ satisfies (GARC) if and only if its finitistic extension degree, $\fed(\Lambda) = \sup\{\ed(M) \ | \ed(M) < \infty\}$, is finite \cite{FED}.  For these rings, the latter condition is weaker than Auslander's Condition as it only depends on the vanishing of self-extensions.  In contrast to testing for Auslander's Condition, however, it is not known if one may compute $\fed(\Lambda)$ by considering only indecomposable $\Lambda$-modules.

 For symmetric algebras, our results give an alternative proof of the above mentioned result from \cite{FED}.  Moreover, we show that to test for (GARC) one  may consider only the indecomposable $\Lambda$-modules.

\begin{cor}\label{cor:fails}  Let $\Lambda$ be a symmetric artin algebra. $\Lambda$ satisfies the Generalized Auslander-Reiten Condition if and only if the supremum \linebreak $\sup\{ \ed(M) \ | \ M \ {\rm is \ indecomposable \ and} \  \ed(M) < \infty \}$ is finite. \end{cor}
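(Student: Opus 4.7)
The plan is to reduce (GARC) for symmetric $\Lambda$ to a statement about indecomposable modules in a single AR component, and then invoke Theorem A together with Theorem \ref{thm:ZAinfty} to extract indecomposable modules of arbitrarily large finite extension degree whenever (GARC) fails. The first observation I would make is that since $\Lambda$ is symmetric, $\Lambda$ is injective, so $\Ext^i(M,\Lambda)=0$ for every $i>0$ and every $M$. Consequently, (GARC) for $\Lambda$ is equivalent to the statement that every $M$ with $\ed(M)<\infty$ is projective. Note also that for any nonzero projective $M$ one has $\ed(M)=0$, since $\Hom(M,M)\neq 0$ while all higher self-extensions vanish.

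For the forward implication, assume (GARC) holds. Then the indecomposable modules with $\ed(M)<\infty$ are exactly the indecomposable projectives, each of extension degree $0$, so the supremum in question equals $0$, which is finite.

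For the converse, suppose the supremum is finite and assume toward a contradiction that (GARC) fails. Then there exists a non-projective $M$ with $\ed(M)<\infty$. Writing $M=\bigoplus_j M_j$ with each $M_j$ indecomposable, the decomposition $\Ext^i(M,M)=\bigoplus_{j,k}\Ext^i(M_j,M_k)$ yields $\ed(M_j)\le \ed(M)<\infty$ for every $j$, and at least one summand, say $M_0$, must be non-projective. By Theorem A the component $\mathcal{C}$ containing $M_0$ has stable part $\mathbb{Z}A_{\infty}$, and by Corollary \ref{cor:ext.deg_component} every module in $\mathcal{C}$ has finite extension degree.

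Finally, fix a quasi-simple module $N$ in $\mathcal{C}$ (which is necessarily indecomposable) and set $m=\ed(N)$. Theorem \ref{thm:ZAinfty} then asserts that every indecomposable $X\in \mathcal{C}$ with $\ql(X)=l$ satisfies $\ed(X)=m+2l$. Since a component of stable type $\mathbb{Z}A_{\infty}$ contains indecomposable modules of every quasi-length $l\ge 0$, this produces indecomposables of arbitrarily large but finite extension degree, contradicting the assumed finiteness of the supremum. The argument is essentially a direct application of Theorem A and Theorem \ref{thm:ZAinfty}; the only step that warrants care is the passage from an arbitrary counterexample $M$ to an indecomposable one, which the direct-sum decomposition of $\Ext$ handles cleanly.
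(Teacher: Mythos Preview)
Your proof is correct and follows essentially the same route as the paper's: the forward direction yields supremum zero, and for the converse a failure of (GARC) produces a non-projective module of finite extension degree, whence Theorem~\ref{thm:main} and Theorem~\ref{thm:ZAinfty} supply indecomposables of arbitrarily large finite extension degree. Your argument is in fact more detailed than the paper's, making explicit the reduction from an arbitrary counterexample to an indecomposable one via the direct-sum decomposition of $\Ext$, a step the paper leaves implicit.
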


\begin{proof} If $\Lambda$ satisfies (GARC), then the supremum is zero.  If $\Lambda$ does not satisfy (GARC), then combining Theorems \ref{thm:main} and \ref{thm:ZAinfty} one sees that there exist indecomposable $\Lambda$-modules having arbitrarily large extension degrees. \end{proof}


\begin{thebibliography}{F7}





\bibitem[AR]{AR} M. Auslander, I. Reiten, {\it On a generalized version of the Nakayama Conjecture}, Proc. Amer. Math. Soc. {\bf 52} (1975), 69--74.

\bibitem[ARS]{ARS} M. Auslander, I. Reiten and  S. O. Smal\o, {\it Representation Theory of artin Algebras}, Cambridge Studies in Advanced Mathematics, {\bf  36}, Cambridge University Press, Cambridge, (1995).





\bibitem[CH]{CH} L.W. Christensen, H. Holm, {\it Algebras that satisfy Auslander's condition on vanishing of cohomology}, Math. Z. {\bf 265} (2010), no. 1, 21--40.



\bibitem[D]{FED} K.~Diveris, {\it Finitistic extension degree}, Algebr.~Represent.~Theory (to appear).


\bibitem[EK]{EK} K. Erdmann, O. Kerner, {On the Stable Module Category of a Self-Injective Algebra}, Trans. Amer. Math. Soc. {\bf 352} no. 5 (2000), 2389--2405.

\bibitem[EKS]{EKS} K. Erdmann, O. Kerner, A. Skowro\'nski, {\it Self-injective algebras of wild tilted type}, Journal of Pure and Applied Algebra {\bf 149} (2000) 127--176. 


\bibitem[GZ1]{GZ1} E. Green, D. Zacharia, {\it Auslander Reiten Components Containing Modules with Bounded Betti numbers}, Trans. Amer. Math. Soc. {\bf 361} (2009) no. 8 , 4195--4214.

\bibitem[GZ2]{GZ2} E. Green, D. Zacharia, {\it On Modules of Finite Complexity over Selfinjective Artin Algebras}, Algebras and Representation Theory {\bf 14} (2011) no. 5, 857--868.

\bibitem[H]{H} D. Happel, {\it Triangulated Categories in the Represenation Theory of finite-dimensional Algebras}, London Math. Soc. Lecture Note Series {\bf 119}, Cambridge University Press, Cambridge (1988).


\bibitem[Ho]{Hoshino} M. Hoshino, {\it Modules without self-extensions and {N}akayama's conjecture}, Arch. Math. (Basel),  {\bf 43} (1984) no. 6, 493--500.

\bibitem[ILP]{ILP} K. Igusa, S. Liu, C. Paquette, {\it A proof of the strong no loop conjecture}, Adv. Math. 228 (2011), no. 5, 2731-2742.




\bibitem[L]{Liu} S. Liu,  {\it Shapes of connected components of the Auslander-Reiten quivers of artin algebras}, Representation Theory of Algebras and Related Topics (Mexico City, 1994) Canad. Math. Soc. Conf. Proc. 19 (1995) 109--137. 

\bibitem[L]{LiuSchulz} S.~Liu, R.~Schulz, {\it The existence of bounded infinite DTr-orbits}, Proc. Amer. Math. Soc. {\bf 122} (1994), no. 4, 1003--1005.

\bibitem[LeS]{LS} H. Lenzing, A. Skowro\'nski, {On selfinjective algebras of Euclidean type}, Colloq. Math. {\bf 79} no. 1 (1999),



\bibitem[S]{Schulz} R. Schulz, {\it A nonprojective module without self-extensions},  Arch. Math. (Basel) {\bf 62} (1994), no. 6, 497--500.




\bibitem[T]{T} H. Tachikawa, {\it Representations of Trivial Extensions of Hereditary Algebras}, Lecture Notes in Math. {\bf 832}, Springer, Berlin, (1980), 579--599.

\bibitem[TW]{TW} H. Tachikawa, T. Wakamatsu,{\it Tilting Functors and Stable Equivalences for Self-injective Algebras}, Journal of Algebra {\bf 109} (1987), 138--165.




\bibitem[Z]{Z} D. Zacharia, {\it Special monomial algebras of finite global dimension}, NATO Adv. Sci. Inst. 233 (1988), 375--378. 


\end{thebibliography}
\end{document}